\newcommand{\Rmnum}[1]{\expandafter\@slowromancap\romannumeral #1@}
\let\@fnsymbol\@arabic
\begin{document}

\newtheorem{theorem}{Theorem}[section]
\newtheorem{observation}[theorem]{Observation}
\newtheorem{corollary}[theorem]{Corollary}
\newtheorem{algorithm}[theorem]{Algorithm}
\newtheorem{problem}[theorem]{Problem}
\newtheorem{question}[theorem]{Question}
\newtheorem{lemma}[theorem]{Lemma}
\newtheorem{proposition}[theorem]{Proposition}

\newtheorem{definition}[theorem]{Definition}
\newtheorem{guess}[theorem]{Conjecture}
\newtheorem{claim}[theorem]{Claim}
\newtheorem{example}[theorem]{Example}
\newtheorem{remark}[theorem]{Remark}

\makeatletter
  \newcommand\figcaption{\def\@captype{figure}\caption}
  \newcommand\tabcaption{\def\@captype{table}\caption}
\makeatother

\newtheorem{acknowledgement}[theorem]{Acknowledgement}
\newtheorem{axiom}[theorem]{Axiom}
\newtheorem{case}[theorem]{Case}
\newtheorem{conclusion}[theorem]{Conclusion}
\newtheorem{condition}[theorem]{Condition}
\newtheorem{conjecture}[theorem]{Conjecture}
\newtheorem{criterion}[theorem]{Criterion}
\newtheorem{exercise}[theorem]{Exercise}
\newtheorem{notation}[theorem]{Notation}

\newtheorem{solution}[theorem]{Solution}
\newtheorem{summary}[theorem]{Summary}
\newtheorem{fact}[theorem]{Fact}

\newcommand{\pp}{{\it p.}}
\newcommand{\de}{\em}

\newcommand{\mad}{\rm mad}

\newcommand*{\QEDA}{\hfill\ensuremath{\blacksquare}}  
\newcommand*{\QEDB}{\hfill\ensuremath{\square}}  

\newcommand{\qf}{Q({\cal F},s)}
\newcommand{\qff}{Q({\cal F}',s)}
\newcommand{\qfff}{Q({\cal F}'',s)}
\newcommand{\f}{{\cal F}}
\newcommand{\ff}{{\cal F}'}
\newcommand{\fff}{{\cal F}''}
\newcommand{\fs}{{\cal F},s}

\newcommand{\g}{\gamma}
\newcommand{\wrt}{with respect to }

\def\C#1{|#1|}
\def\E#1{|E(#1)|}
\def\V#1{|V(#1)|}
\def\cB{{\mathcal B}}
\def\cD{{\mathcal D}}
\def\cF{{\mathcal F}}
\def\cI{{\mathcal I}}
\def\cP{{\mathcal P}}
\def\cT{{\mathcal T}}
\def\NN{{\mathbb N}}
\def\VEC#1#2#3{#1_{#2},\dots,#1_{#3}}
\def\VECOP#1#2#3#4{#1_{#2}#4\dots#4#1_{#3}}
\def\vp{\varphi}
\def\ve{\varepsilon}
\def\FR{\frac}
\def\st{\colon\,}
\def\esub{\subseteq}
\def\nul{\varnothing}
\def\SE#1#2#3{\sum_{#1=#2}^{#3}}

\def\Phi{\vp}
\def\iarb{\Upsilon}
\def\ipac{\nu}

\title{Monochromatic $k$-edge-connection colorings of graphs\footnote{Supported by NSFC No.11871034, 11531011 and NSFQH No.2017-ZJ-790.}}

\renewcommand{\thefootnote}{\arabic{footnote}}

\author{\small Ping Li$^1$, ~ Xueliang Li$^{1,2}$\\
\small $^1$Center for Combinatorics and LPMC, Nankai University\\
\small Tianjin 300071, China\\
\small $^2$School of Mathematics and Statistics, Qinghai Normal University\\
\small Xining, Qinghai 810008, China\\
\small qdli\underline{ }ping@163.com, ~ lxl@nankai.edu.cn\\
}

\date{}
\maketitle


\begin{abstract}
A path in an edge-colored graph $G$ is called monochromatic if any two edges on the path have the same color. For $k\geq 2$, an edge-colored graph $G$
is said to be monochromatic $k$-edge-connected if every two distinct vertices of $G$ are connected by at least $k$ edge-disjoint monochromatic paths, and
$G$ is said to be uniformly monochromatic $k$-edge-connected if every two distinct vertices are connected by at least $k$ edge-disjoint monochromatic paths such
that all edges of these $k$ paths colored with a same color. We use $mc_k(G)$ and $umc_k(G)$ to denote the maximum number of colors that ensures $G$
to be monochromatic $k$-edge-connected and, respectively, $G$ to be  uniformly monochromatic $k$-edge-connected.
In this paper, we first conjecture that for any $k$-edge-connected graph $G$, $mc_k(G)=e(G)-e(H)+\lfloor\frac{k}{2}\rfloor$, where $H$ is a minimum $k$-edge-connected spanning subgraph of $G$. We verify the conjecture for $k=2$. We also prove the conjecture for
$G=K_{k+1}$ when $k\geq4$ is even, and for
$G=K_{k,n}$ when $k\geq4$ is even, or when $k=3$ and $n\geq k$.
When $G$ is a minimal $k$-edge-connected graph, we give an upper bound
of $mc_k(G)$, i.e., $mc_k(G)\leq k-1$, and
$mc_k(G)\leq \lfloor\frac{k}{2}\rfloor$ when $G=K_{k,n}$.
For the uniformly monochromatic $k$-edge-connectivity, we prove that for all $k$, $umc_k(G)=e(G)-e(H)+1$, where $H$ is a minimum $k$-edge-connected spanning subgraph of $G$.\\
[2mm] {\bf Keywords:} edge-coloring, monochromatic path, edge-connectivity, monochromatic $k$-edge connection number.\\
[2mm] {\bf AMS subject classification (2010)}: 05C15, 05C40.
\end{abstract}

\baselineskip16pt

\section{Introduction}

All graphs in this paper are simple and undirected. For a graph $G$, we use $V(G), E(G)$ to denote the vertex set and edge set of $G$, respectively, and $e(G)$
the number of edges of $G$. For all other terminology and notation not defined here we follow Bondy and Murty \cite{B}.

For a natural number $r$, we use $[r]$ to denote the set $\{1,2,\cdots, r\}$ of integers. Let $\Gamma: E(G)\rightarrow [r]$ be an edge-coloring of $G$ that allows a
same color to be assigned to adjacent edges. For two vertices $u$ and $v$ of $G$, a {\em monochromatic uv-path} is a $uv$-path of $G$ whose edges are
colored with a same color, and $G$ is {\em monochromatic connected } if any two distinct vertices of $G$ are connected by a monochromatic path.
An edge-coloring $\Gamma$ of $G$ is a {\em monochromatic connection coloring (MC-coloring) }if it makes $G$ monochromatic connected. The {\em monochromatic connection number}
of a connected graph $G$, denoted by $mc(G)$, is the maximum number of colors that are needed in order to make $G$ monochromatic connected. An {\em extremal MC-coloring}
of $G$ is an $MC$-coloring that uses $mc(G)$ colors.

The notion monochromatic connection coloring was introduced by Caro and Yuster in \cite{CY}. Many results have been obtained; see \cite{CLW, GLQZ, JLW, MWYY}. For more
knowledge on the monochromatic connections of graphs we refer to a survey paper \cite{LW2}.
Gonzlez-Moreno, Guevara, and Montellano-Ballesteros in \cite{MGM} generalized the above concept to digraphs. Now we introduce the concept of {\em monochromatic $k$-edge-connectivity} of graphs.
An edge-colored graph $G$ is {\em monochromatic $k$-edge-connected} if every two distinct vertices are connected by at least $k$ edge-disjoint monochromatic paths
(allow some of the paths to have different colors). An edge-coloring $\Gamma$ of $G$ is a {\em monochromatic $k$-edge-connection coloring ($MC_k$-coloring) }if it makes $G$
monochromatic $k$-edge-connected. The {\em monochromatic $k$-edge-connection number}, denoted by $mc_k(G)$, of a connected graph $G$ is the maximum number of colors that are needed in order to make $G$
monochromatic $k$-edge-connected. Since we can color all the edges of a $k$-edge-connected graph by distinct colors, $mc_k(G)$ is well-defined.
An {\em extremal $MC_k$-coloring} of $G$ is an $MC_k$-coloring that uses $mc_k(G)$ colors.

In an edge-colored graph $G$, we say that a subgraph $H$ of $G$ is induced by color $i$ if $H$ is induced by all the edges with a same color $i$ of $G$. If a color $i$ only color one edge of $E(G)$, then we call the color $i$ is a {\em trivial color}, and the edge is a {\em trivial edge}; otherwise, we call the colors (edges) {\em non-trivial}. We call an extremal $MC_k$-coloring  a {\em good $MC_k$-coloring} of $G$ if the coloring has the maximum number of trivial edges.

Suppose that $X$ is a proper vertex subset of $G$. We use $E(X)$ to denote the set of edges with both ends in $X$. For a graph $G$ and $X\subset V(G)$,
to shrink $X$ is to delete all edges in $E(X)$ and then merge the vertices of $X$ into a single vertex.  A partition of a vertex set $V$ is to divide $V$
into some mutual disjoint nonempty sets. Suppose $\cP=\{V_1,\cdots,V_s\}$ is a partition of $V(G)$. Then $G/\cP$ is a graph obtained from $G$ by shrinking every $V_i$ into a single vertex.

An edge $e$ of a $k$-edge-connected graph $G$ is {\em deletable} if $G\backslash e$ is also a $k$-edge-connected graph.
A $k$-edge-connected graph $G$ is {\em minimally $k$-edge-connected} if none of its edges is deletable. A {\em minimal $k$-edge-connected spanning subgraph} of $G$
is a $k$-edge-connected spanning graph of $G$ that does not have any deletable edges. {\em A minimum $k$-edge-connected spanning subgraph} of $G$ is a minimal $k$-edge-connected spanning subgraph
of $G$ that has minimum number of edges. The next result was obtained by Mader.

\begin{theorem} [Mader~\cite{M}] \label{Mader}
 Let $G$ be a minimally $k$-edge-connected graph of order $n$. Then
 \begin{enumerate}
 \item $e(G)\leq k(n-1)$.
 \item every edge $e$ of $G$ is contained in a $k$-edge cut of $G$.
 \item $G$ has a vertex of degree $k$.
 \end{enumerate}

\end{theorem}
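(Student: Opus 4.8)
The plan is to treat the three assertions in the order (2), (3), (1), since the later parts rest on the combinatorics of minimum cuts exposed by (2). Throughout write $d(X)$ for the number of edges of $G$ with exactly one endpoint in $X$, and call a set with $\emptyset\neq X\subsetneq V(G)$ \emph{tight} if $d(X)=k$. For (2), fix an edge $e$. Since $G$ is minimally $k$-edge-connected, $G\setminus e$ is not $k$-edge-connected, so it has an edge cut $\delta_{G\setminus e}(X)$ with $|\delta_{G\setminus e}(X)|\le k-1$. As $G$ is $k$-edge-connected, $d(X)\ge k$, so deleting $e$ must have lowered this cut; hence $e$ crosses $X$ and $d(X)\le k$. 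Therefore $d(X)=k$ and $e$ lies in the $k$-edge cut $\delta(X)$. In particular tight sets exist and the minimum cut of $G$ equals $k$, so every nonempty proper subset $S$ has $d(S)\ge k$.

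For (3) I would exploit the symmetry $d(X)=d(\bar X)$ and the submodularity of the cut function, $d(X)+d(Y)\ge d(X\cap Y)+d(X\cup Y)$ and $d(X)+d(Y)\ge d(X\setminus Y)+d(Y\setminus X)$. Choose a tight set $X$ of minimum cardinality (one exists by (2)); I claim $|X|=1$, which exhibits a vertex of degree $k$. If $|X|\ge 2$, then $\sum_{v\in X}\deg(v)=2e(G[X])+k\ge 2k$ forces $e(G[X])>0$, so $X$ contains an edge $e=uv$. By (2), $e$ lies in a tight set $Y$ separating $u$ from $v$, so $u\in X\cap Y$ and $v\in X\setminus Y$ are both nonempty. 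Uncrossing $X$ with $Y$ (or with $\bar Y$ in the boundary case $X\cup Y=V$) and using $d(S)\ge k$ for every proper nonempty $S$, the submodular inequality collapses to equality and produces a tight set ($X\cap Y$ or $X\cap\bar Y$) that is a proper nonempty subset of $X$. This contradicts the minimality of $X$, so $|X|=1$.

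For (1) I would set up an inductive contraction. Suppose first there is a tight set $X$ with $2\le|X|\le n-2$. Writing $G/X$ and $G/\bar X$ for the graphs obtained by shrinking $X$ and $\bar X$ respectively, each acquires a single new vertex of degree $d(X)=k$, each is $k$-edge-connected (shrinking never lowers edge-connectivity), and $e(G)=e(G/X)+e(G/\bar X)-k$. The key point is that both quotients are again minimally $k$-edge-connected: an internal edge $e$ of one side lies in a tight set $Y$ of $G$ by (2), and the uncrossing step from (3) replaces $Y$ by a tight set confined to that side, which descends to a $k$-edge cut of the quotient through $e$, while the $k$ edges at the new vertex form its own $k$-cut. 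Induction on the order then gives $e(G/X)\le k(n-|X|)$ and $e(G/\bar X)\le k|X|$, whence $e(G)\le k(n-1)$. It remains to treat the degenerate case in which every proper tight set is a singleton or a co-singleton: there, by (2) every edge is incident with a vertex of degree exactly $k$, so the set $W$ of degree-$k$ vertices is a vertex cover; with $I=V\setminus W$ independent and $\deg(w)=k$ on $W$ one gets $e(G)=k|W|-e(G[W])\le k|W|\le k(n-1)$, the case $W=V$ being handled by $kn/2\le k(n-1)$ for $n\ge 2$.

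The main obstacle is precisely this reduction step in (1): showing that $G/X$ and $G/\bar X$ stay \emph{minimally} $k$-edge-connected, i.e. that every surviving edge still lies in a $k$-edge cut of the quotient. This is where the uncrossing of (3) must be pushed further, arguing edge by edge that a separating tight set can always be replaced by one lying entirely on a single side of $\{X,\bar X\}$, and where the boundary configurations ($X\cup Y=V$, or a tight set meeting both sides) need care. A secondary technical point is that contraction may create parallel edges, so the induction is most naturally run in the class of loopless multigraphs, for which all three statements and the submodular machinery are unchanged.
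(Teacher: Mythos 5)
The paper contains no proof of this statement to compare against: Theorem~\ref{Mader} is quoted as a known result of Mader~\cite{M} and used as a black box, so your attempt can only be judged on its own merits. On those merits it is essentially correct, and indeed more than the paper provides. Your arguments for (2) and (3) are complete: (2) is the standard observation that a deficient cut of $G\setminus e$ must be crossed by $e$ and hence has size exactly $k$ in $G$; (3) is a correct uncrossing of a minimum-cardinality tight set $X$ against a tight set $Y$ through an edge inside $X$, using submodularity when $X\cup Y\neq V$ and the posimodular inequality when $X\cup Y=V$, in both cases producing a smaller tight set. For (1), the obstacle you flag does close exactly as you sketch: if $f=uv$ lies inside $\bar X$ and its tight set $Y$ given by (2) splits $X$, then $X\setminus Y$ and $Y\setminus X$ are nonempty proper subsets, so $2k=d(X)+d(Y)\geq d(X\setminus Y)+d(Y\setminus X)\geq 2k$ forces $d(Y\setminus X)=k$; since $Y\setminus X$ contains $u$, avoids $v$, and is disjoint from $X$, it descends to a $k$-edge cut of $G/X$ through $f$, while $\delta(X)$ itself covers the edges at the contracted vertex. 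Hence both quotients are minimally $k$-edge-connected, and with $e(G)=e(G/X)+e(G/\bar X)-k$ the induction gives $e(G)\leq k(n-|X|)+k|X|-k=k(n-1)$. Your degenerate case (every tight set a singleton or co-singleton, so the degree-$k$ vertices form a vertex cover $W$ and $e(G)\leq k|W|$) and your remark that the induction must run over loopless multigraphs—where (2), (3) and both uncrossing inequalities hold verbatim—are also correct and are genuinely needed, since contraction creates parallel edges. In short: the proposal is a valid proof, organized around cut-function uncrossing and contraction, of a theorem the paper merely cites.
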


The following theorem was proved by Nash-Williams and Tutte independently.

\begin{theorem}[\cite{NW}~\cite{T}] \label{N-W-T}
 A graph $G$ has at least $k$ edge-disjoint spanning trees if and only if $e(G/\cP)\geq k(|G/\cP|-1)$ for any vertex partition $\cP$ of $V(G)$.
\end{theorem}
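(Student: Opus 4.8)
The plan is to prove the two implications separately: the forward (necessity) direction is routine, while the converse rests on the matroid union theorem. For necessity, suppose $G$ has $k$ edge-disjoint spanning trees $T_1,\dots,T_k$, and fix a partition $\cP=\{V_1,\dots,V_r\}$ with $r=|G/\cP|$. Contracting each part $V_j$ to a single vertex sends each $T_i$ to a connected spanning subgraph of $G/\cP$, which must use at least $r-1$ edges; these are exactly the edges of $T_i$ running between distinct parts of $\cP$. Since the $T_i$ are pairwise edge-disjoint, their crossing edges are disjoint subsets of the crossing edges of $G$, so $e(G/\cP)\geq\sum_{i=1}^{k}(r-1)=k(|G/\cP|-1)$. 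This establishes the inequality for every $\cP$.

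For sufficiency, I would reformulate ``$k$ edge-disjoint spanning trees'' in matroid language. Let $n=|V(G)|$ and let $M=M(G)$ be the cycle matroid, whose rank function is $\rho(F)=n-c(V,F)$, where $c(V,F)$ is the number of components of the spanning subgraph with edge set $F$. A spanning tree is precisely a base of $M$, so $G$ has $k$ edge-disjoint spanning trees if and only if $E(G)$ contains $k$ disjoint bases, equivalently if and only if the maximum size of a union of $k$ independent sets of $M$ equals $k(n-1)$ (the value $k(n-1)$ being an upper bound, since each independent set has at most $n-1$ elements, and equality forces each of the $k$ parts to be a base). By the matroid union theorem, this maximum equals $\min_{F\subseteq E(G)}\big(|E(G)\setminus F|+k\,\rho(F)\big)$.

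It then remains to show that the partition hypothesis forces this minimum to be at least $k(n-1)$. Given any $F\subseteq E(G)$, let $\mathcal{Q}$ be the partition of $V(G)$ into the components of $(V,F)$, so that $\rho(F)=n-|\mathcal{Q}|$ and every edge joining two distinct parts of $\mathcal{Q}$ lies in $E(G)\setminus F$; hence $|E(G)\setminus F|\geq e(G/\mathcal{Q})$. Applying the hypothesis with $\cP=\mathcal{Q}$ gives
\[
|E(G)\setminus F|+k\,\rho(F)\ \geq\ e(G/\mathcal{Q})+k(n-|\mathcal{Q}|)\ \geq\ k(|\mathcal{Q}|-1)+k(n-|\mathcal{Q}|)=k(n-1).
\]
Taking the minimum over $F$ and comparing with the matroid union formula shows that $E(G)$ contains $k$ disjoint bases, i.e.\ $k$ edge-disjoint spanning trees.

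The genuinely hard ingredient is the matroid union theorem itself, that is, the min--max rank formula for a union of matroids; everything else is bookkeeping. If a self-contained argument were required, I would instead prove the converse directly by an augmenting-path argument: starting from any family of $k$ edge-disjoint forests whose total size is less than $k(n-1)$, one uses the partition hypothesis to locate an edge whose addition, after a sequence of swaps between the forests, strictly increases the total size, iterating until $k$ spanning trees are obtained. Showing that such an augmentation always exists, which amounts to proving the matroid union bound from scratch, is where the real work lies.
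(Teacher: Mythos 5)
The paper does not prove this statement at all: it is quoted as a classical theorem of Nash--Williams and Tutte, with citations to their 1961 papers, so there is no internal proof to compare yours against. Judged on its own, your argument is correct. The necessity direction is the standard counting argument and is complete. For sufficiency, your reduction of the partition condition to the matroid union bound is exactly right: given $F\subseteq E(G)$, passing to the partition $\mathcal{Q}$ of $V(G)$ into the components of $(V,F)$, noting that $\rho(F)=n-|\mathcal{Q}|$ and that every crossing edge of $\mathcal{Q}$ lies in $E(G)\setminus F$, and then invoking the hypothesis for $\mathcal{Q}$, yields $|E(G)\setminus F|+k\,\rho(F)\geq k(|\mathcal{Q}|-1)+k(n-|\mathcal{Q}|)=k(n-1)$ for every $F$, as you state. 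Two small points deserve explicit mention: (i) identifying bases of the cycle matroid with spanning trees of size $n-1$ requires $G$ to be connected; this does follow from the hypothesis (apply it to the partition of $V(G)$ into connected components, which would give $0\geq k$ if there were two or more components), but you should say so rather than leave it implicit; (ii) the entire weight of the proof rests on Edmonds' matroid union theorem, which you correctly flag as the genuinely hard external ingredient. Your route is the standard modern derivation of this result; the original proofs of Nash--Williams and Tutte were direct inductive exchange arguments not phrased in matroid language, and your proposed fallback (augmenting a family of $k$ forests via swaps) is essentially a re-proof of the matroid partition theorem in this special case, so it is honest of you to locate the real work there.
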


We denote $\psi(G)=min_{|\cP|\geq2}\frac{e(G/\cP)}{|G/\cP|-1}$, and $\Psi(G)=\lfloor\psi(G)\rfloor$. Then the Nash-Williams-Tutte theorem can be restated as follows.

\begin{theorem} \label{N-T}
 A graph $G$ has exactly $k$ edge-disjoint spanning trees if and only if $\Psi(G)=k$.
\end{theorem}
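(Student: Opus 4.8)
The plan is to derive this restatement directly from the Nash--Williams--Tutte theorem (Theorem~\ref{N-W-T}) by reformulating its hypothesis in terms of the quantity $\psi(G)$ and then applying the ``at least $k$'' version at two consecutive integer values. Throughout, let $\tau(G)$ denote the maximum number of edge-disjoint spanning trees of $G$, so that the phrase ``$G$ has exactly $k$ edge-disjoint spanning trees'' means $\tau(G)=k$.

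First I would observe that, since $V(G)$ is finite, there are only finitely many vertex partitions $\cP$, so the minimum in the definition $\psi(G)=\min_{|\cP|\geq 2}\frac{e(G/\cP)}{|G/\cP|-1}$ is genuinely attained. Because $|G/\cP|=|\cP|\geq 2$ forces $|G/\cP|-1\geq 1>0$, one may clear denominators without reversing inequalities and conclude that the condition
\[
e(G/\cP)\geq k\bigl(|G/\cP|-1\bigr)\quad\text{for every vertex partition }\cP
\]
is equivalent to $\frac{e(G/\cP)}{|G/\cP|-1}\geq k$ for every $\cP$ with $|\cP|\geq 2$, which in turn is equivalent to $\psi(G)\geq k$. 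The lone partition with $|\cP|=1$ gives $e(G/\cP)=0=k\cdot 0$ and imposes no constraint, so it may safely be ignored. Hence Theorem~\ref{N-W-T} reads: for every positive integer $k$, one has $\tau(G)\geq k$ if and only if $\psi(G)\geq k$.

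Next I would pin down $\tau(G)$ exactly by using this equivalence at $k=\tau(G)$ and at $k=\tau(G)+1$. Taking $k=\tau(G)$ gives $\psi(G)\geq \tau(G)$, hence $\Psi(G)=\lfloor\psi(G)\rfloor\geq \tau(G)$. Taking $k=\tau(G)+1$ and using that $\tau(G)\geq \tau(G)+1$ is false gives that $\psi(G)\geq \tau(G)+1$ is also false, i.e.\ $\psi(G)<\tau(G)+1$, whence $\Psi(G)=\lfloor\psi(G)\rfloor\leq \tau(G)$. Combining the two bounds yields $\Psi(G)=\tau(G)$. Substituting this identity into the meaning of ``exactly $k$'' then gives $\tau(G)=k\iff\Psi(G)=k$, which is precisely the assertion.

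The content of this statement is entirely inherited from Theorem~\ref{N-W-T}; there is no separate combinatorial difficulty to overcome, and I do not expect a genuine obstacle. The only points requiring care are bookkeeping ones: verifying that the minimum defining $\psi(G)$ is attained (so that ``$\psi(G)\geq k$'' really is equivalent to the universally quantified inequality), and handling the floor together with the strict inequality $\psi(G)<\tau(G)+1$ so that the two one-sided bounds on $\Psi(G)$ match exactly. Once the ``at least $k$'' form of Nash--Williams--Tutte is rephrased as the identity $\tau(G)=\Psi(G)$, the write-up is short.
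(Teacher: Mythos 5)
Your proposal is correct and matches the paper's treatment: the paper offers no separate argument, presenting Theorem~\ref{N-T} as an immediate restatement of Theorem~\ref{N-W-T} via the definitions of $\psi(G)$ and $\Psi(G)$, which is exactly the equivalence $\tau(G)\geq k\iff\psi(G)\geq k$ that you formalize. Your two-sided bookkeeping with the floor (applying the equivalence at $k=\tau(G)$ and $k=\tau(G)+1$ to get $\Psi(G)=\tau(G)$) is just a careful write-up of what the paper leaves implicit.
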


If $\Gamma$ is an extremal $MC_k$-coloring of $G$, then each color-induced subgraph is connected; otherwise we can recolor the edges of one of its components
by a fresh color, and then the new coloring is also an $MC_k$-coloring of $G$, but then the number of colors is increased by one, which contradicts that $\Gamma$ is extremal.

For the monochromatic $k$-edge-connection number of graphs, we conjecture that the following statement is true.

\begin{conjecture} \label{coj}
For a $k$-edge-connected graph $G$ with $k\geq 2$, $mc_k(G)=e(G)-e(H)+\lfloor\frac{k}{2}\rfloor$, where $H$ is a minimum $k$-edge-connected spanning subgraph of $G$.
\end{conjecture}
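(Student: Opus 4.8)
The plan is to prove the two inequalities $mc_k(G)\le e(G)-e(H)+\lfloor k/2\rfloor$ and $mc_k(G)\ge e(G)-e(H)+\lfloor k/2\rfloor$ separately, working throughout with the redundancy identity: if a coloring uses $c$ colors and color $i$ spans $e_i$ edges, then $c=e(G)-\sum_i(e_i-1)$, so maximizing the number of colors is the same as minimizing the total redundancy $\sum_i(e_i-1)$. Since monochromatic paths of distinct colors are automatically edge-disjoint, the $MC_k$ requirement is equivalent to the clean reformulation $\sum_i \lambda_i(u,v)\ge k$ for every pair $u,v$, where $\lambda_i(u,v)$ denotes the local edge-connectivity of $u$ and $v$ inside the subgraph induced by color $i$. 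This equivalence is what I would exploit in both directions.

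For the lower bound I would start from a minimum $k$-edge-connected spanning subgraph $H$, assign each of the $e(G)-e(H)$ edges outside $H$ its own fresh (trivial) color, and then try to color $E(H)$ with exactly $\lfloor k/2\rfloor$ colors so that $\sum_i\lambda_i(u,v)\ge k$ already holds inside $H$. When $k=2m$ is even this amounts to decomposing $E(H)$ into $m$ spanning $2$-edge-connected subgraphs, each contributing $\lambda_i\ge 2$; when $k=2m+1$ is odd the edge budget $e(H)\ge kn/2$ leaves only about $n/2$ edges beyond $m$ such subgraphs, so I would instead aim at a decomposition into $m=\lfloor k/2\rfloor$ spanning subgraphs whose local edge-connectivities sum to at least $k$ everywhere (for instance one $3$-edge-connected piece and $m-1$ two-edge-connected pieces). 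The natural tool is Theorems \ref{N-W-T} and \ref{N-T}: since $H$ is $k$-edge-connected it is $2\lfloor k/2\rfloor$-edge-connected, so counting cut sizes in any quotient $H/\cP$ yields $\lfloor k/2\rfloor$ edge-disjoint spanning trees, which I would then attempt to augment into $2$-edge-connected pieces using the surplus edges. I expect this decomposition---making it work simultaneously for all pairs, and absorbing the odd-$k$ parity with only a bounded edge surplus---to be the main obstacle, and presumably the reason why only $K_{k+1}$ and $K_{k,n}$, where the decomposition can be written down explicitly, are settled.

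For the upper bound I would fix a good extremal $MC_k$-coloring and first record two structural facts. A single-edge (trivial) color class yields a monochromatic $uv$-path only when $uv$ is that very edge, so every pair receives at most one monochromatic path from trivial colors; consequently every vertex is incident with a non-trivial edge, and the union $W$ of all non-trivial color classes is a spanning $(k-1)$-edge-connected subgraph of $G$, with local edge-connectivity at least $k$ between any two vertices that are non-adjacent in $G$ or joined by a non-trivially colored edge. Writing $p$ for the number of non-trivial classes, the number of colors equals $e(G)-\bigl(e(W)-p\bigr)=e(G)-\sum_i(e_i-1)$, so it remains to prove $\sum_i(e_i-1)\ge e(H)-\lfloor k/2\rfloor$.

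The hard part will be this last inequality. The naive estimate $e(W)\ge e(H)$ fails because $W$ is only $(k-1)$-edge-connected, and lower-bounding each $e_i$ by the connectivity it supplies is too weak, since a color class may be highly connected on one part of $G$ and merely tree-like elsewhere while still summing to $k-1$ with the other classes. To close the gap I would combine Mader's Theorem \ref{Mader}---every edge of a minimal $k$-edge-connected graph lies in a $k$-edge-cut, such graphs have at most $k(n-1)$ edges, and they contain a vertex of degree $k$---with an exchange argument that rebuilds a genuine $k$-edge-connected spanning subgraph out of $W$ together with at most $\lfloor k/2\rfloor$ ``shared'' edges, thereby forcing $e(W)-p\ge e(H)-\lfloor k/2\rfloor$. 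Making this exchange work for an arbitrary $k$-edge-connected $G$, rather than for the structured cases $K_{k+1}$ and $K_{k,n}$ where cuts and degrees are fully understood, is exactly where the general conjecture seems to resist, and I would expect a complete proof to hinge on a new lemma controlling how much connectivity a fixed number of colors can share across all cuts at once.
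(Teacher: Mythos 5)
First, a point of calibration: the statement you set out to prove is Conjecture~\ref{coj}, which the paper itself leaves open. The paper only verifies the case $k=2$ (Theorem~\ref{mc1}), and the cases $G=K_{k+1}$ and $G=K_{k,n}$ for even $k\geq4$ (plus $K_{3,n}$), the latter two via explicit Hamiltonian-cycle decompositions (Facts~\ref{signi} and~\ref{bip}) combined with edge counting against Mader's bound and Lemma~\ref{total}. Your framing is sound as far as it goes --- the redundancy identity, the reformulation of the $MC_k$ condition as $\sum_i\lambda_i(u,v)\geq k$ via Menger, and the structural facts about a good extremal coloring (every vertex meets a non-trivial edge; the union $W$ of non-trivial classes is spanning and $(k-1)$-edge-connected) are all correct. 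But, as you concede yourself, both halves of the argument stop exactly where the real difficulty begins, so what you have is a reduction, not a proof.

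Concretely, the two gaps are these. (a) For the lower bound, you need $\lfloor k/2\rfloor$ edge-disjoint spanning subgraphs whose local edge-connectivities sum to at least $k$ between every pair, \emph{and} whose total edge count is at most $e(H)$. Since the union of such pieces is itself a $k$-edge-connected spanning subgraph, minimality of $H$ forces the union to be a minimum $k$-edge-connected spanning subgraph which the pieces exactly partition; merely finding $\lfloor k/2\rfloor$ edge-disjoint spanning $2$-edge-connected subgraphs somewhere in $G$ is useless if their union has more than $e(H)$ edges. No such partition theorem is known: Nash-Williams--Tutte (Theorems~\ref{N-W-T},~\ref{N-T}) produces spanning trees, not $2$-edge-connected pieces, and augmenting trees into $2$-edge-connected spanning subgraphs within the fixed budget $e(H)$ is precisely the step that fails in general --- which is why the paper only settles graphs admitting explicit Hamiltonian decompositions. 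For odd $k\geq5$ even the paper proves nothing, and your ``one $3$-edge-connected plus $\lfloor k/2\rfloor-1$ two-edge-connected pieces'' would require a minimally $3$-edge-connected spanning piece to fit in a surplus of roughly $n/2$ edges, which you do not construct. (b) For the upper bound, everything hinges on $\sum_i(e_i-1)\geq e(H)-\lfloor k/2\rfloor$ over non-trivial classes, and your proposed ``exchange argument'' is never specified. The paper's own experience shows how much machinery this demands: even for $k=2$ it needs the full chain of recoloring arguments (Claims~\ref{3-1}--\ref{3-41}) to force a good extremal coloring to have a single non-trivial class equal to a minimum $2$-edge-connected spanning subgraph, and for general minimal $k$-edge-connected graphs the best the paper can do is $mc_k(G)\leq k-1$ (Theorem~\ref{nk}), still far from the conjectured $\lfloor k/2\rfloor$ (achieved only for $K_{k+1}$ and $K_{k,n}$ by exploiting their regularity, as in Theorem~\ref{kkn}). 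So your proposal correctly identifies, but does not close, the two open problems that make this a conjecture rather than a theorem.
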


In Section 2, we will prove that the conjecture is true for $k=2$, and that it is also true for some special graph classes.
We also give a lower bound of $mc_k(G)$ for $2\leq k\leq \Psi(G)$, and an upper bound of $mc_k(G)$ for minimally $k$-edge-connected graphs
with $k\geq 2$.

The following lemma seems easy, but it is useful for some proofs in Section 2.

\begin{lemma} \label{c2}
Suppose that $G$ is a $2$-edge-connected graph and $H$ is a $2$-edge-connected subgraph of $G$. Let $S$ be subset of $E(G)$ whose ends are contained in
$V(H)$ such that $S\cap E(H)=\emptyset$. Then $G\backslash S$ is also a $2$-edge-connected graph.
\end{lemma}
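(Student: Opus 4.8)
The plan is to establish the $2$-edge-connectivity of $G\backslash S$ through the cut characterization: a graph on at least two vertices is $2$-edge-connected if and only if every edge cut separating its vertex set into two nonempty parts contains at least two edges. Since deleting edges does not change the vertex set, $V(G\backslash S)=V(G)$ still has at least two vertices, so it suffices to show that for every partition $V(G)=A\cup\bar A$ into nonempty parts, the set of edges of $G\backslash S$ crossing between $A$ and $\bar A$ has size at least two.

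First I would pin down where the deletions can possibly matter. Every edge of $S$ has both of its ends in $V(H)$, so an edge of $S$ crosses the cut $(A,\bar A)$ only when both $A\cap V(H)$ and $\bar A\cap V(H)$ are nonempty; that is, only when the partition actually separates $V(H)$. This dichotomy drives the whole argument, and I would split into the two corresponding cases.

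In the first case, $V(H)$ lies entirely on one side, say $V(H)\subseteq A$. Then no edge of $S$ crosses the cut, so the cut of $G\backslash S$ induced by $(A,\bar A)$ coincides with that of $G$, which has at least two edges because $G$ is $2$-edge-connected. In the second case, both $A\cap V(H)$ and $\bar A\cap V(H)$ are nonempty, so the partition $(A\cap V(H),\,\bar A\cap V(H))$ is a nontrivial cut of $H$. Since $H$ is $2$-edge-connected, at least two edges of $H$ cross this cut; these edges lie in $E(H)$ and therefore avoid $S$ (as $S\cap E(H)=\emptyset$), and each of them crosses $(A,\bar A)$ in $G$ as well, since one end lies in $A\cap V(H)\subseteq A$ and the other in $\bar A\cap V(H)\subseteq\bar A$. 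Hence at least two surviving edges cross the cut in $G\backslash S$.

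Combining the two cases shows that every cut of $G\backslash S$ has at least two edges, so $G\backslash S$ is $2$-edge-connected. I do not expect a serious obstacle here; the only point requiring care is the reduction to the cut characterization and the verification, in the splitting case, that the two guaranteed edges of $H$ are genuinely edges of $G\backslash S$, which follows precisely from the hypothesis $S\cap E(H)=\emptyset$. The case where $V(H)$ is entirely contained in one side is the place where one might, without care, wrongly worry about losing edges; the observation that $S$ can only meet cuts that split $V(H)$ is exactly what makes the proof clean.
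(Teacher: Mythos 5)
Your proof is correct, but it follows a genuinely different route from the paper's. You argue via the cut characterization of edge-connectivity: every nontrivial cut $(A,\bar A)$ of $G\backslash S$ either fails to separate $V(H)$ --- in which case no edge of $S$ crosses it and the cut is inherited intact from $G$ --- or it does separate $V(H)$, in which case two crossing edges of $E(H)$ survive because $S\cap E(H)=\emptyset$. The paper instead works on the path side of Menger: it takes two edge-disjoint $uv$-paths $P_1,P_2$ in $G$, truncates each at the first vertex $a_i$ and last vertex $b_i$ lying in $V(H)$, observes that the four end-segments $L_1,\dots,L_4$ avoid $S$ (each of their edges has an end outside $V(H)$), and concludes that $H\cup\bigcup_{i\in[4]}L_i$ is a $2$-edge-connected subgraph of $G\backslash S$ containing $u$ and $v$. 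Your cut-counting argument is shorter and arguably tighter: it needs no case analysis for $u$ or $v$ lying in $V(H)$, and it avoids the one step the paper leaves unjustified, namely that gluing the segments $L_i$ onto $H$ preserves $2$-edge-connectivity (a handle-attachment fact that itself deserves a line of proof). What the paper's constructive approach buys is a template that is reused almost verbatim later in the article (e.g., in case (\uppercase\expandafter{\romannumeral2}) of Claim 2.6 and in Theorem 2.9), where one really does need to exhibit a concrete $2$-edge-connected subgraph built from $H$ plus attached path segments; your argument, while cleaner for the lemma in isolation, does not produce that reusable object.
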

\begin{proof}
We need to show that for any $u,v$ in $G\backslash S$ there are at least two edge-disjoint paths connecting them. From the condition, there are two edge-disjoint $uv$-path $P_1,P_2$ in $G$.
Suppose $a_1$ is the first vertex of $V(P_1)$ from $u$ to $v$ contained in $V(H)$, and $a_2$ is the first vertex of $V(P_2)$ from $u$ to $v$  contained in $V(H)$ (if $u\in V(H)$, then $u=a_1=a_2$);
suppose $b_1$ is the last vertex from $u$ to $v$ contained in $V(H)$, and $b_2$ is the last vertex of $V(P_2)$ from $u$ to $v$ contained in $V(H)$ (if $v\in V(H)$, then $v=b_1=b_2$).
Let $L_i=uP_ia_i$ and $L_{i+2}=b_iP_iv$, $i=1,2$. Because each of $L_i$ does not contain any edge of $S$ and $H$ is a $2$-edge-connected graph, we have that $H\cup \bigcup_{i\in [4]}L_i$
is also a $2$-edge-connected graph of $G\backslash S$. Therefore, there are two edge-disjoint $uv$-paths in $G\backslash S$.
\end{proof}

In Section 3, we introduce other version of monochromatic $k$-edge-connection of graphs, i.e., uniformly monochromatic $k$-edge-connection
of graphs, and get some results. For details we will state them there.

\section{Results on the monochromatic $k$-edge-connection number}

\begin{theorem}\label{T2}
Conjecture \ref{coj} is true when $G$ and $k$ satisfy one of the following conditions:
 \begin{enumerate}
\item $k=2$, i.e., $G$ is a $2$-edge-connected graph.
 \item $G=K_{k+1}$ where $k\geq 4$ is even.
 \item $G=K_{k,n}$ where $k\geq 4$ is even, and $k=3$ and $n\geq k$.
 \end{enumerate}
\end{theorem}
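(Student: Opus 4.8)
My plan is to establish the two inequalities $mc_k(G)\ge e(G)-e(H)+\lfloor\frac{k}{2}\rfloor$ and $mc_k(G)\le e(G)-e(H)+\lfloor\frac{k}{2}\rfloor$ separately, and to exploit that in Cases 2 and 3 the minimum $k$-edge-connected spanning subgraph is forced to be all of $G$. Indeed every vertex of $K_{k+1}$ has degree $k$, and every vertex on the side of size $n$ in $K_{k,n}$ has degree $k$; such a vertex cannot retain $k$ edge-disjoint paths after losing an incident edge, so $H=G$ there and the target value collapses to $\lfloor\frac{k}{2}\rfloor$. The single tool behind every upper bound is a local-degree identity: for an $MC_k$-coloring with connected color classes $G_1,\dots,G_c$ and any pair $u,v$, grouping the $k$ edge-disjoint monochromatic $uv$-paths by color gives $\sum_i \lambda_{G_i}(u,v)\ge k$, where $\lambda_{G_i}(u,v)$ denotes the number of edge-disjoint $uv$-paths inside $G_i$. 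Since $\lambda_{G_i}(u,v)\le \deg_{G_i}(u)$ and $\sum_i\deg_{G_i}(u)=\deg_G(u)$, any vertex $u$ with $\deg_G(u)=k$ forces $\sum_i\lambda_{G_i}(u,v)=k$ and $\lambda_{G_i}(u,v)=\deg_{G_i}(u)$ for every color $i$ and every $v$.

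For the lower bounds I would exhibit explicit colorings. In Case 1 I color a minimum $2$-edge-connected spanning subgraph $H$ with a single color and give each remaining edge its own color; because $H$ is $2$-edge-connected and monochromatic this yields two edge-disjoint monochromatic paths between every pair, using $e(G)-e(H)+1$ colors. In Case 2 I use that $K_{k+1}$ has odd order $k+1$ and hence decomposes into $\frac{k}{2}$ Hamiltonian cycles; coloring each cycle with its own color produces two edge-disjoint monochromatic paths per color, hence $k$ in total, with exactly $\frac{k}{2}$ colors. In Case 3 with $k=3$ a single color on all of $K_{3,n}$ already works, since $K_{3,n}$ is then monochromatic and $3$-edge-connected; for even $k$ I would decompose $K_{k,n}$ into $\frac{k}{2}$ spanning $2$-edge-connected subgraphs by splitting, at each vertex of the side of size $n$, its $k$ incident edges into $\frac{k}{2}$ pairs, one per color, so that every color class has all degree-two vertices on that side. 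Organizing each color class so that these degree-two vertices correspond to the edges of a $2$-edge-connected multigraph on the $k$ vertices of the opposite side makes the class $2$-edge-connected and spanning, which is possible precisely because $n\ge k$.

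For the upper bounds in Cases 2 and 3 I would push the degree identity further. In $K_{k+1}$ every vertex has degree $k$, so $\lambda_{G_i}(u,v)=\deg_{G_i}(u)$ for all $u,v$; comparing this with the symmetric identity read from $v$ gives $\deg_{G_i}(u)=\deg_{G_i}(v)$, so each non-empty color class is $d_i$-regular and spanning with $\sum_i d_i=k$. Because $K_{k+1}$ has odd order, a $d_i$-regular subgraph must have $d_i$ even, hence $d_i\ge 2$, and $\sum_i d_i=k$ allows at most $\frac{k}{2}$ colors. In $K_{k,n}$ the identity holds at the degree-$k$ vertices of the side of size $n$; it forces these vertices to share a common degree $b_i$ in each class and forces each class to be spanning, with $\sum_i b_i=k$. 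A class with $b_i=1$ would have exactly $n$ edges yet span $k+n$ vertices connectedly, forcing $k\le 1$; hence $b_i\ge 2$ and again at most $\lfloor\frac{k}{2}\rfloor$ colors. For Case 1 the target is $c\le e(G)-e(H)+1$, equivalently $\sum_i(|E(G_i)|-1)\ge e(H)-1$ over the non-trivial color classes. Here I would start from a good extremal coloring, note that the non-trivial classes already span $V(G)$ and supply two edge-disjoint monochromatic paths between every non-adjacent pair, and then invoke Lemma \ref{c2} to delete chords and extract a $2$-edge-connected spanning subgraph with at most $\sum_i(|E(G_i)|-1)+1$ edges, so that minimality of $H$ closes the inequality.

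The main obstacle is exactly this upper bound of Case 1: converting the qualitative facts about a good extremal coloring into the quantitative bound $\sum_i(|E(G_i)|-1)\ge e(H)-1$. The clean degree argument that disposes of Cases 2 and 3 is unavailable when $H\ne G$, and I expect the real work to be a careful edge-deletion scheme that repeatedly applies Lemma \ref{c2} to discard, from each connected color class, all but a tree's worth of edges while preserving global $2$-edge-connectivity, together with a bookkeeping argument showing that precisely one extra edge (the $+1$) survives. A secondary difficulty lies in the $K_{k,n}$ lower bound for even $k$: one must check that the pairing of edges can be chosen so that every color class is genuinely $2$-edge-connected rather than merely of minimum degree two, which is where $n\ge k$ is essential and is handled by the multigraph reformulation above.
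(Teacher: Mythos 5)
Your proposal for item 1 is not a proof, and this is the genuine gap. You correctly identify the upper bound $mc_2(G)\le e(G)-e(H)+1$ as ``the main obstacle,'' but what you offer for it is a plan (``a careful edge-deletion scheme \dots I expect the real work to be \dots''), and the plan as described would not work: it has no leverage against non-trivial color classes that are \emph{trees}. Nothing in your setup rules such classes out --- indeed, two edge-disjoint spanning trees, each given its own color, already supply two edge-disjoint monochromatic paths between every pair of vertices --- yet a tree class has no chords, so Lemma \ref{c2}-style deletion discards nothing from it. In the all-tree scenario your target inequality $\sum_i\bigl(e(G_i)-1\bigr)\ge e(H)-1$ becomes, for two spanning trees, the assertion that any graph with two edge-disjoint spanning trees admits a $2$-edge-connected spanning subgraph with at most $2n-3$ edges; this is true but is itself a nontrivial structural fact, not a consequence of chord counting. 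Handling exactly this situation is the bulk of the paper's proof of Theorem \ref{mc1}: it fixes a \emph{good} extremal $MC_2$-coloring (maximizing trivial edges, and secondarily the number of non-cut edges) and runs a chain of recoloring exchanges --- Claim \ref{3-1} (each non-trivial class is $2$-edge-connected or a tree), Claim \ref{3-2} (trees and $2$-edge-connected classes cannot interlock), Claim \ref{3-3} (tree classes are eliminated via the delicate iterated construction of trees $T_1,T_2,\dots$, closed trails, and the freed edge set $B$), Claims \ref{3-4} and \ref{3-41} (a single non-trivial class remains and it is a minimum $2$-edge-connected spanning subgraph). Nothing in your proposal replaces this structure theory, so item 1 --- the heart of the theorem --- is missing.

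Items 2 and 3 are in much better shape, and your upper-bound mechanism for $K_{k+1}$ is genuinely different from the paper's. The identity $\sum_i\lambda_{G_i}(u,v)\ge k$ combined with $\lambda_{G_i}(u,v)\le \deg_{G_i}(u)$ and $\sum_i\deg_{G_i}(u)=\deg_G(u)=k$ correctly forces $\lambda_{G_i}(u,v)=\deg_{G_i}(u)=\deg_{G_i}(v)$ for every color and every pair; in $K_{k+1}$ this makes every class spanning and $d_i$-regular, and the handshake parity on an odd number of vertices gives $d_i\ge 2$, hence at most $k/2$ colors. The paper instead proves that in a minimally $k$-edge-connected graph every class of an extremal coloring is spanning (Lemma \ref{total}, via Mader's theorem) and then does a global edge count; your local degree argument is shorter and bypasses Lemma \ref{total} entirely. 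Your $K_{k,n}$ upper bound is essentially the paper's Theorem \ref{kkn}. One repair is still needed on the lower-bound side: for $K_{k,n}$ with $k$ even you assert that the pairings at the degree-$k$ vertices ``can be organized'' so that each class's multigraph on the $k$-side is $2$-edge-connected because $n\ge k$; the concrete ingredient making this simultaneous choice possible for all $k/2$ colors is the Hamiltonian decomposition of $K_{k,k}$ (the paper's Fact \ref{bip}), which you should invoke explicitly, just as you invoke the Hamiltonian decomposition of $K_{k+1}$ (Fact \ref{signi}) in item 2.
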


We restate the first result of Theorem \ref{T2} as follows.

\begin{theorem} \label{mc1}
Let $G$ be a $2$-edge-connected graph. Then $mc_2(G)=e(G)-e(H)+1$, where $H$ is a minimum $2$-edge-connected spanning subgraph of $G$.
\end{theorem}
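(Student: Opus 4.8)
The plan is to prove the two inequalities $mc_2(G)\ge e(G)-e(H)+1$ and $mc_2(G)\le e(G)-e(H)+1$ separately.

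For the lower bound I would exhibit an explicit $MC_2$-coloring with $e(G)-e(H)+1$ colors: assign one common color, say color $1$, to all edges of $H$, and give each of the $e(G)-e(H)$ edges of $E(G)\setminus E(H)$ its own private color. Since $H$ is a $2$-edge-connected spanning subgraph, every pair of vertices is joined by two edge-disjoint paths inside $H$, and both of these paths are monochromatic of color $1$; hence this coloring is an $MC_2$-coloring using $1+(e(G)-e(H))$ colors, giving $mc_2(G)\ge e(G)-e(H)+1$.

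For the upper bound, fix an extremal $MC_2$-coloring $\Gamma$ with $t=mc_2(G)$ colors; recall that each color class is connected. It suffices to construct from $\Gamma$ a $2$-edge-connected spanning subgraph $H'$ with $e(H')\le e(G)-t+1$, because then $e(H)\le e(H')$ yields $t\le e(G)-e(H)+1$. Write $p$ for the number of non-trivial colors and $q$ for the number of trivial colors, so $t=p+q$, and let $U$ be the union of the non-trivial color classes. I would first observe that every vertex is incident with a non-trivial edge: a vertex meeting only trivial edges could reach any other vertex only by monochromatic paths of length one, since the color of a trivial edge occurs nowhere else, contradicting monochromatic $2$-edge-connectivity; thus $U$ is spanning. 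Next I would show $U$ is $2$-edge-connected by a cut argument modelled on Lemma~\ref{c2}: if some edge-cut $[A,\overline A]$ of $G$ contained at most one edge of $U$, then for an appropriately chosen separated pair $u\in A$, $v\in \overline A$ each of the two required edge-disjoint monochromatic $uv$-paths would have to cross the cut, and any monochromatic path crossing through a trivial edge is that single edge alone; so both paths would be forced through non-trivial cut-edges of a common color, of which there is at most one, a contradiction.

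It then remains to prove the saving estimate $e(H')\le e(G)-t+1$ with $H'$ a minimum $2$-edge-connected spanning subgraph of $U$; since $e(U)=e(G)-q$, this is equivalent to showing that $U$ contains at least $p-1$ deletable edges, i.e. that a minimum $2$-edge-connected spanning subgraph of $U$ has at most $e(U)-(p-1)$ edges. I would prove this by induction on $p$, contracting one non-trivial color class $G_i$ to a single vertex; contraction preserves both $2$-edge-connectivity and the monochromatic $2$-edge-connectivity of the coloring while dropping the number of non-trivial colors by one. The point is that $G_i$ cannot be glued to the rest of $U$ along a single cut vertex only: if it were, then a vertex of $G_i$ and a vertex outside $V(G_i)$ would admit no monochromatic path at all, since every monochromatic path between them would have to change color at that cut vertex. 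Hence each reattached class shares at least two vertices with the rest of $U$ and thereby creates a chord of a $2$-edge-connected subgraph, which Lemma~\ref{c2} allows me to delete, accounting for one unit of saving per inductive step. The main obstacle is precisely this bookkeeping: making rigorous that each reattached non-trivial class contributes at least one Lemma~\ref{c2}-deletable chord and that these deletions remain simultaneously valid, so that the total saving reaches $p-1$. This is the step where the monochromatic-connectivity constraint must be exploited most carefully, and I expect it to be the crux of the whole argument.
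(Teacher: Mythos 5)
Your lower-bound construction is correct, and your two preliminary observations for the upper bound are sound: the union $U$ of the non-trivial color classes is spanning (a vertex meeting only trivial edges would have at most one monochromatic path to any other vertex, since $G$ is simple), and $U$ is $2$-edge-connected (your cut argument works once the pair $u,v$ is taken to be the two ends of the unique non-trivial cut edge, so that neither of the two required paths can be a single trivial edge). The genuine gap is exactly where you yourself locate it: the claim that $U$ contains a $2$-edge-connected spanning subgraph with at most $e(U)-(p-1)$ edges is never proved, and it is not bookkeeping --- it is the entire content of the theorem. Moreover, the induction you sketch has concrete obstructions. First, shrinking $V(G_i)$ (in the paper's sense) deletes \emph{every} edge with both ends in $V(G_i)$, including edges of other non-trivial classes and trivial edges, so the number of non-trivial colors can drop by much more than one and surviving classes can degenerate to single edges; the inductive hypothesis then returns a saving proportional to the surviving classes, not to $p-1$. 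Second, contraction creates parallel edges, and both of your preliminary facts rely on simplicity: in a multigraph, two parallel trivial edges already give two edge-disjoint monochromatic paths, so neither "$U$ is spanning" nor "$U$ is $2$-edge-connected" survives the contraction step, and the induction does not close. Third, the step "shares at least two vertices with the rest of $U$, hence creates a Lemma~\ref{c2}-deletable chord" is unsupported: Lemma~\ref{c2} requires a $2$-edge-connected subgraph $H$ and an edge outside $E(H)$ with both ends in $V(H)$, but the union of the remaining classes need not be $2$-edge-connected, and sharing two vertices produces no such chord. (Your observation that a class cannot attach at a single cut vertex is correct, but it only yields the two-vertex overlap, not a deletable edge.)

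It is worth noting how the paper sidesteps this difficulty entirely: rather than trying to extract $p-1$ deletable edges from an arbitrary extremal coloring, it chooses a \emph{good} extremal $MC_2$-coloring (one maximizing the number of trivial edges, with a further secondary maximization) and shows by recoloring arguments (Claims~\ref{3-1}--\ref{3-41}) that such a coloring has exactly one non-trivial class, which must be a minimum $2$-edge-connected spanning subgraph; that is, it proves $p=1$ outright instead of proving a saving of $p-1$. The recoloring technique --- merging classes that cover one another and releasing the edges of one class as fresh trivial colors, then deriving a contradiction with goodness --- is the tool that makes the "saving" rigorous; if you pursue your route you will find yourself reconstructing precisely these arguments, so the optimization over colorings (extremal, then good) should be built into your setup from the start.
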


The following is the proof of Theorem \ref{mc1}. For convenience, we abbreviate the term ``monochromatic path" as ``path" in the proof.

Let $\Gamma$ be a good $MC_2$-coloring of $G$.
Then we denote the set of non-trivial colors of $\Gamma$ by $[r]$, and denote $G_i$ as a subgraph induced by the color $i$; subject to above,
let $p(\Gamma)=\Sigma_{i\in [r]}p(G_i)$ be maximum, where $p(G_i)$ is the number of non-cut edges of $G_i$. It is obvious that each of these edges is
contained in some cycles of $G_i$.

\begin{claim} \label{3-1}
Each $G_i$ is either a $2$-edge-connected graph or a tree.
\end{claim}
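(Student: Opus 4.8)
The plan is to argue by contradiction. Assume some non-trivial color class $G_i$ is neither $2$-edge-connected nor a tree. Since $\Gamma$ is extremal, every color class is connected, so $G_i$ is connected; being non-$2$-edge-connected it has a bridge $e=xy$, and being a non-tree it contains a cycle, hence a non-cut edge. Write $A$ and $B$ for the two components of $G_i\backslash e$, with $x\in A$ and $y\in B$; at least one of them, say $A$, carries a cycle of $G_i$. The idea is to modify $\Gamma$ locally so as to strictly increase the number of colors while remaining an $MC_2$-coloring, which contradicts that $\Gamma$ uses $mc_2(G)$ colors.

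First I would recolor the bridge $e$ with a fresh color $r+1$. This raises the number of colors by one and turns $e$ into a trivial edge; moreover $p(\Gamma)$ is unchanged, since $e$ is a cut edge of $G_i$ and contributes nothing to $p(G_i)$, while clearly $p(A)+p(B)=p(G_i)$. Thus the only thing that can fail is that the recolored graph ceases to be monochromatic $2$-edge-connected, and if I can rule this out, the resulting coloring contradicts the maximality of the number of colors in the good $MC_2$-coloring $\Gamma$.

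The main obstacle is exactly this last verification. For a pair $u,v$ whose two edge-disjoint monochromatic paths avoid $e$, nothing is lost. The dangerous pairs are those with $u\in V(A)$, $v\in V(B)$ for which one of the two paths, say $P$, is the color-$i$ path crossing $e$; since $e$ is a bridge of $G_i$, the companion path $Q$ has some color $\ell\neq i$ and already joins $u$ to $v$ without using $e$. I then need to rebuild a second monochromatic $uv$-path edge-disjoint from $Q$ once $e$ has changed color. Here I would exploit the two surviving pieces of $P$ (the color-$i$ subpaths from $u$ to $x$ inside $A$ and from $y$ to $v$ inside $B$) together with the cycle present in $G_i$, and then invoke the $2$-edge-connectivity of $G$ through Lemma \ref{c2}, applied to the $2$-edge-connected subgraph sitting inside $A$, to reconnect $x$ to $y$ along edges outside $Q$. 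Making this reconnection simultaneously monochromatic and edge-disjoint from $Q$ for every dangerous pair is the delicate point, and it is where the presence of the cycle (the non-tree hypothesis) and the global $2$-edge-connectivity of $G$ must be combined.

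If the direct reconnection resists, the fallback is to use the tertiary maximality of $p(\Gamma)$ instead. Since $G$ is $2$-edge-connected, the bridge $e$ lies on a cycle of $G$, so there is an $xy$-path $R$ in $G\backslash e$ that must leave $G_i$ (otherwise $e$ would not be a bridge of $G_i$); recoloring a suitable edge of $R$ into color $i$ places $e$ on a color-$i$ cycle and strictly increases $p(\Gamma)$ without decreasing the number of colors, again contradicting that $\Gamma$ is good. Either way the contradiction forces $G_i$ to have no bridge or no cycle, that is, to be $2$-edge-connected or a tree.
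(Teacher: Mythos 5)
Your primary plan has a genuine gap at exactly the point you yourself flag as ``delicate,'' and the gap cannot be repaired. After you give the bridge $e=xy$ a fresh color, the class of color $i$ becomes $G_i\backslash e$, whose two components are $A$ and $B$. So for a pair $u\in V(A)$, $v\in V(B)$ whose two edge-disjoint monochromatic paths under $\Gamma$ were a color-$i$ path $P$ of length at least $2$ through $e$ together with a path $Q$ of some other color, there is now \emph{no} monochromatic $uv$-path other than $Q$: color $i$ no longer connects $A$ to $B$, the fresh color lives on the single edge $e$ only (so it serves only the pair $(x,y)$), and no other color is known to join $u$ to $v$ while avoiding $E(Q)$. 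The surviving color-$i$ pieces of $P$ and the cycle inside $A$ cannot be concatenated with $e$ into a monochromatic path, because the concatenation mixes colors, and Lemma \ref{c2} is a statement about $2$-edge-connectivity of uncolored subgraphs of $G$; it produces paths, not monochromatic paths. Note also that if your verification could be completed, your recoloring would increase the number of colors and contradict extremality alone, showing that no class of an extremal coloring contains a bridge at all; since the claim must allow tree classes, which consist entirely of bridges, this is a strong warning that no such direct argument exists.

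Your fallback is also unsound. The $xy$-path $R$ in $G\backslash e$ may contain many edges outside $G_i$; recoloring one of them with color $i$ does not put $e$ on a color-$i$ cycle, while recoloring all of them can decrease the number of colors (each recolored trivial edge kills a color) and can destroy the two edge-disjoint monochromatic paths of pairs that relied on those edges inside their own classes, so neither extremality, nor goodness, nor even the $MC_2$ property is preserved in any controlled way; moreover, the maximality of $p(\Gamma)$ is only imposed among good extremal colorings, so it yields no contradiction unless all of those properties are re-verified. The paper's actual proof avoids both problems by never isolating the bridge: it chooses the cut edge $e=uv$ so that $v$ lies in a maximal $2$-edge-connected subgraph $B$ of $G_i$, takes a cycle edge $e'=vw$ of $B$, and lets $P'$ be a $uw$-path of some other color $j$ (it exists because $e$ is a cut edge of $G_i$, so $G_i$ contains only one $uw$-path). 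It then recolors $G_i-e'$ with color $j$, keeping only the cycle edge $e'$ trivial. Since $G_i-e'$ is still connected, monochromatic connectivity across the old bridge survives inside the merged color $j$, and any pair whose path used $e'$ reroutes along $L_1-e'+e+P'$, which is entirely of color $j$. The number of colors is unchanged, and the contradiction is then obtained against goodness when $j$ is non-trivial (the trivial-edge count grows), or against the maximality of $p(\Gamma)$ when $j$ is trivial.
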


\begin{proof}
Suppose that $G_i$ is neither a $2$-edge-connected graph nor a tree, i.e., $G_i$ contains both non-trivial blocks and cut edges.
Therefore we can choose a cut edge $e=uv\in E(G_i)$ such that $v$ belongs to a maximal $2$-edge-connected subgraph $B$ of $G_i$
(actually, $B$ is the union of some non-trivial blocks). Because $B$ is a $2$-edge-connected subgraph of $G_i$, each of its vertices belongs
to a cycle. Let $v$ be contained in a cycle $C$ of $B$ and $e'=vw$ be an edge of $C$. Because $e$ is a cut edge of $G_i$, there is just
one $uw$-path in $G_i$ (the $uw$-path is $P$). Therefore, there exists another $uw$-path $P'$, which is colored differently from $i$.

If $P'$ is a path colored by $j$, then we can obtain a new coloring $\Gamma'$ of $G$ from $\Gamma$ by recoloring all edges of $G_i-e'$ with $j$.
We first prove that $\Gamma'$ is an $MC_2$-coloring of $G$, i.e., we need to prove that for any two vertices $a, b$ of $V(G)$, there are at least two $ab$-paths under $\Gamma'$.
If at least one vertex of $a, b$ does not belong to $V(G_i)$, then the two $ab$-paths are colored differently from $i$. Because we just change the color $i$,
the two $ab$-paths are not affected; if both of $a, b$ belong to $V(G_i)$ and at least one of them does not belong to $V(B)$, then
we can choose a right $ab$-path such that it does not contain $e'$ (under $\Gamma$), and so there are at least two $ab$-paths under $\Gamma'$;
if both $a, b\in V(B)$, then the two $ab$-paths under $\Gamma$ (call them $L_1,L_2$) belong to $B$. If $e'$ is not an edge of any $L_1,L_2$,
then the two $ab$-paths are not affected. Otherwise, let $e'\in E(L_1)$, and then $L=L_1-e'+e+P'$ is a trial connecting $a,b$.
Because $E(L)\cap E(L_2)=\emptyset$, there are two $ab$-paths under $\Gamma'$.

According to the above, $\Gamma'$ is an $MC_2$-coloring of $G$. If $j\in [r]$ is a non-trivial color, then the number of colors has not changed,
but the number of trivial edges is increased by one, which contradicts that $\Gamma$ is good; otherwise, if $j$ is a trivial color, i.e.,
$uw$ is a trivial edge, then the new coloring $\Gamma'$ is a good $MC_2$-coloring (the number of colors and non-trivial edges have not changed),
but compared to $p(\Gamma)$, $p(\Gamma')$ is increased by one, which contradicts that $p(\Gamma)$ is maximum.
Therefore, we have proved that $G_i$ is either a $2$-edge-connected graph or a tree.

\end{proof}

By Claim \ref{3-1}, each $G_i$ is either a $2$-edge-connected graph or a tree. Suppose there are $h$ trees and $s=k-h$ $2$-edge-connected graphs.
W.l.o.g., suppose that $G_1,\cdots,G_s$ are $s$ $2$-edge-connected graphs and $G_{s+1}=T_1,\cdots,G_k=T_h$ are $h$ trees.
$G_i$ colored by $i$ and $F_j$ colored by $s+j$. For convenience, we also call the color of $F_j$ $j$ when there is no confusion.
\begin{claim} \label{3-2}
For each $G_i$ and $T_j$, let $e=uv\in E(G_i)$ and $e'=xy\in E(T_j)$. Then at most one of $u,v$ belongs to $V(T_j)$, and at most one of $x,y$ belongs to $V(G_i)$.
\end{claim}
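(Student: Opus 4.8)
The plan is to prove both halves by contradiction: in each case I assume that the two endpoints of the edge in question both lie in the vertex set of the other colour class, and I then build a recolouring $\Gamma'$ of $G$ that is again an $MC_2$-colouring but is strictly better than $\Gamma$ with respect to the quantities along which $\Gamma$ was chosen optimal (more colours, or more trivial edges, or larger $p$-value), contradicting the choice of $\Gamma$. The underlying reason a gain is available is that whenever both ends of an edge sit in the other class, those two vertices are joined \emph{both} inside the $2$-edge-connected class $G_i$ and inside the tree $T_j$, so one of the two connections is redundant and can be dismantled to liberate a colour.

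For the first statement, suppose $e=uv\in E(G_i)$ with $u,v\in V(T_j)$. As $T_j$ is a tree it contains a unique $uv$-path $Q$. I would recolour $uv$ with the colour $j$; then the class of colour $j$ becomes $T_j+uv$, which carries the single cycle $D=Q+uv$, while the class of colour $i$ becomes $G_i- uv$, still connected since $uv$ lay on a cycle of the $2$-edge-connected graph $G_i$. A cycle in a colour class is redundant for connectivity, so picking any edge $f\in E(Q)\subseteq E(D)$ and giving it a brand-new colour $c$ leaves $(T_j+uv)- f$ a spanning tree of $T_j+uv$; colour $j$ then still spans $V(T_j)$ and still joins $u$ to $v$ (now directly through $uv$), and we have created one extra colour. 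If this $\Gamma'$ is still an $MC_2$-colouring, its colour count exceeds that of $\Gamma$, contradicting extremality.

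For the second statement, suppose $e'=xy\in E(T_j)$ with $x,y\in V(G_i)$. Here Lemma~\ref{c2}, applied with $H=G_i$ and $S=\{xy\}$ (whose ends lie in $V(G_i)$, and which is not an edge of $G_i$ because $G$ is simple and $xy$ has colour $j$), shows that $G\backslash xy$ is still $2$-edge-connected, so $xy$ is redundant for the global connectivity. I would recolour $xy$ with colour $i$: the class of colour $i$ becomes the $2$-edge-connected graph $G_i+xy$, and the class of colour $j$ becomes $T_j- xy$. If $xy$ is an internal edge of $T_j$ then $T_j- xy$ splits into two components, one of which may be recoloured with a fresh colour, again increasing the number of colours; if $xy$ is a pendant edge of $T_j$ then the numbers of colours and of trivial edges are unchanged while $p$ strictly increases, because $xy$ becomes a non-cut edge of $G_i+xy$ and no previously non-cut edge of $G_i$ is affected by adding an edge. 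Either way we contradict the optimality of $\Gamma$, provided $\Gamma'$ is still an $MC_2$-colouring.

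The hard part in both statements is precisely this proviso. For a pair $a,b$ I would start from the two edge-disjoint monochromatic $ab$-paths $L_1,L_2$ supplied by $\Gamma$; pairs none of whose paths meet the moved edge are untouched, so the work is to re-route the pairs whose path used it. The delicate pairs are those lying on opposite sides of the modified tree edge, because after the recolouring the tree no longer joins the two sides monochromatically and the naive substitute path through $G_i$ changes colour mid-way and so fails to be monochromatic; supplying a genuine second edge-disjoint monochromatic $ab$-path for these pairs is the real obstacle. I expect to handle it by routing through the cycle that now lives in colour $j$ (in the first statement) and through the two edge-disjoint $xy$-paths that the $2$-edge-connected $G_i$ provides (in the second), and by controlling the pairs internal to $G_i$ in the first statement, where one must rule out that deleting $uv$ destroys the $2$-edge-connectivity needed between two vertices of $G_i$. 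Verifying that these rerouted paths exist and remain edge-disjoint is, I anticipate, the only genuinely technical step, and it is where the $2$-edge-connectivity of $G_i$ and Lemma~\ref{c2} do the decisive work.
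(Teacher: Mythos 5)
Your recolourings do not in general preserve the $MC_2$ property, and the ``proviso'' you defer to the end is exactly where the argument collapses --- irreparably, for the specific moves you propose. In your first case you transfer only the single edge $uv$ to colour $j$ (and extract an edge $f$ of $Q$ into a fresh colour). Now take a pair $a,b\in V(G_i)\setminus V(T_j)$ whose two edge-disjoint monochromatic $ab$-paths under $\Gamma$ both lie in $G_i$, one of them through $uv$: for instance, if $G_i$ is a cycle $u,v,w,z$, the pair $(w,z)$ has exactly the edge $wz$ and the long arc $w\,v\,u\,z$. After your move the long arc mixes colours $i$ and $j$, so it is no longer monochromatic, and nothing in the hypotheses provides a substitute: colour-$i$ paths now live in $G_i-uv$, which does not carry two edge-disjoint $wz$-paths; colour-$j$ paths cannot reach $w$ or $z$, since monochromatic paths of colour $j$ have all their vertices in $V(T_j)$; and $w,z$ need not lie in any other colour class. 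Your idea of ``routing through the cycle that now lives in colour $j$'' cannot help precisely because these pairs are outside $V(T_j)$. The second case fails the same way: if you recolour $xy$ with colour $i$ and give one component of $T_j-xy$ a fresh colour (or even in your pendant sub-case), a pair $a,b$ on opposite sides of $e'$ with $a\notin V(G_i)$ loses its unique $T_j$-path --- now split among two or three colours --- and has no monochromatic replacement, since no colour-$i$ path can contain $a$. Your appeal to Lemma \ref{c2} is a red herring here: it concerns edge-connectivity of the uncoloured graph $G\backslash\{xy\}$ and says nothing about the existence of monochromatic paths.

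The structural point you miss is that one cannot liberate a new colour by such local moves (indeed, any recolouring with more colours that remained an $MC_2$-colouring is outright impossible, by extremality of $\Gamma$, so a proof that tries to exhibit one must hide a false step). The paper instead merges whole colour classes, keeping the number of colours fixed, and derives the contradiction from \emph{goodness} (maximality of the number of trivial edges), not from extremality. Concretely, in Case 1 it recolours all of $E(G_i)-e$ with colour $j$, so that colour $j$ then contains $G_i+L-e$, where $L=uT_jv$; this graph is $2$-edge-connected, so every pair inside $V(G_i)$ has two edge-disjoint paths in the single colour $j$, while every other pair is untouched because the class of colour $j$ only grows. In Case 2 it recolours all of $E(T_j)-e'$ with colour $i$, so that colour $i$ contains the connected graph $G_i\cup(T_j\backslash e')$, which monochromatically reconnects the two sides of $e'$, and pairs inside $V(G_i)$ keep their two colour-$i$ paths because $G_i$ itself is untouched. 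In both cases the displaced edge ($e$, respectively $e'$) keeps its old colour and becomes trivial, contradicting goodness. To salvage your plan you would have to abandon the attempt to gain a colour and carry out exactly this whole-class merge.
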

\begin{proof}
We prove it by contradiction, i.e., suppose that there exist $G_i$ and $T_j$, and there exist $e=uv\in E(G_i)$ and $e'=xy\in E(T_j)$, such that either $u,v\in V(T_j)$ or $x,y\in V(G_i)$.

{\em Case 1:} Suppose $u,v\in V(T_j)$. Then we recolor $E(G_i)-e$ by $j$ and keep the color of $e$. We now prove that the new coloring (call it $\Gamma'$) is an extremal $MC_2$-coloring of $G$.

We denote the segment of $uT_jv$ by $L$. For any pair of vertices $a,b$ of $V(G)$, if at least one vertex does not belong to $V(G_i)$,
then the two $ab$-paths colored differently from $i$ under $\Gamma$. Because we just change the color $i$, the two $ab$-paths are not affected;
if $a,b\in V(G_i)$, because $G_i+L-e$ is also $2$-edge-connected, then there are two $ab$-paths (with the same color $j$) under $\Gamma'$. Therefore, $\Gamma'$ is an $MC_2$-coloring,
and because the number of colors are not changed, $\Gamma'$ is still an extremal $MC_2$-coloring. However, the number of non-trivial edges is increased ($e$ becomes a trivial edge),
which contradicts that $\Gamma$ is good.

{\em Case 2:} Suppose $x,y\in V(G_i)$. Then we recolor $E(T_j)-e'$ with $i$ and keep the color of $e'$. We now prove that the new coloring (call it $\Gamma'$) is an extremal $MC_2$-coloring of $G$.

For any vertices pair $a,b$ of $V(G)$, if at least one of $a,b$ does not belong to $V(T_j)$, then the two $ab$-paths colored differently from $j$.
Because we just change the color $j$, the two $ab$-paths are not affected; if $a,b\in V(T_j)$ and at leat one of $a,b$ does not belong $V(G_i)$, then there is just
one $ab$-path of $T_j$ and the other $ab$-paths colored differently from $i$ under $\Gamma$. Because $G_i\cup(T_j\backslash e')$ is connected and all of them colored by $i$ under $\Gamma'$,
there are two $ab$-paths under $\Gamma'$; if both $a,b\in V(G_i)$, then there are two $ab$-paths (with the same color $i$) under $\Gamma'$.
Above all, $\Gamma'$ is an $MC_2$-coloring of $G$. Because the number of colors are not changed, $\Gamma'$ is an extremal $MC_2$-coloring of $G$.
However, the number of non-trivial edges is increased ($e'$ becomes a trivial edge), which contradicts that $\Gamma$ is good.
\end{proof}

By Claim \ref{3-2}, for each edge $e'=xy$ of a $T_j$, the other $xy$-paths belong to some $T_q$; for each edge $e=uv$ of a $G_i$, the other $uv$-paths belong to some $G_l$.
\begin{claim} \label{3-3}
$h=0$, i.e., $G_i$ is a $2$-edge-connected graph for any $i\in [r]$.
\end{claim}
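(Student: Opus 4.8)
The plan is to argue by contradiction: suppose $h\ge 1$, so that some color class is a tree. By the remark following Claim \ref{3-2}, every edge $xy$ of a tree class has its second (edge-disjoint) monochromatic $xy$-path inside \emph{another} tree class $T_q$; indeed the backup cannot lie in a $2$-edge-connected class, since Claim \ref{3-2} forbids both ends of such an edge from sitting in one $G_i$, nor can it be a parallel trivial edge, as $G$ is simple. In particular both $x$ and $y$ lie in $V(T_q)$, so the tree classes form a self-contained subsystem and it is natural to work inside the union $\mathcal{T}=\bigcup_j T_j$. Since every edge of $\mathcal{T}$ lies on a cycle of $\mathcal{T}$ (the edge together with its backup path), $\mathcal{T}$ is bridgeless, so each of its connected components is $2$-edge-connected. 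Fix one such component $C$ and let $T_1,\dots,T_m$ be the tree classes whose edges lie in $C$; each such tree is entirely contained in $C$ because it is connected. A single tree cannot back up its own edges, so $m\ge 2$.

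The key structural consequence of the overlap condition is that every vertex $v$ of $C$ is incident to edges of at least two distinct tree colors, since any edge at $v$ forces $v$ into a second tree; hence $\sum_{i=1}^m |V(T_i)|\ge 2|V(C)|$ and $e(C)=\sum_{i=1}^m(|V(T_i)|-1)\ge 2|V(C)|-m$, so $C$ is comparatively dense. I would then recolor $C$ as follows: pick a minimum $2$-edge-connected spanning subgraph $H_C$ of $C$, give all of its edges one fresh color, and give each of the remaining $e(C)-e(H_C)$ edges its own fresh trivial color.

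This recoloring keeps $G$ monochromatic $2$-edge-connected. Each tree color appears only inside $C$, so under $\Gamma$ any monochromatic path using an edge of $C$ lies inside a single $T_i\subseteq C$ and therefore joins two vertices of $V(C)$; thus only pairs with both ends in $V(C)$ can be affected, and for such a pair the single $2$-edge-connected class $H_C$ already supplies two edge-disjoint same-color paths, while every pair with an endpoint outside $C$ keeps its two monochromatic paths untouched (these never use an edge of $C$). On the edges of $C$ the new coloring uses $1+(e(C)-e(H_C))$ colors in place of the $m$ tree colors. If $1+(e(C)-e(H_C))>m$, the total number of colors increases, contradicting that $\Gamma$ is extremal. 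If $1+(e(C)-e(H_C))=m$, the number of colors is unchanged, so the new coloring is still extremal, but it creates $e(C)-e(H_C)=m-1\ge 1$ trivial edges where $C$ previously had none, contradicting that $\Gamma$ is good (one may alternatively note that $p(\Gamma)$ strictly increases, since $H_C$ is $2$-edge-connected whereas every tree edge is a cut edge). Either way we reach a contradiction, forcing $h=0$.

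The main obstacle is precisely the inequality $e(C)-e(H_C)\ge m-1$, that is, that $C$ admits $m-1$ edge deletions preserving $2$-edge-connectivity (equivalently, that a minimum $2$-edge-connected spanning subgraph of $C$ is smaller than $C$ by at least $m-1$ edges). The vertex-level bound $e(C)\ge 2|V(C)|-m$ shows $C$ is dense and rules out degenerate cases such as $C$ being a cycle or a theta-graph, but it is not by itself enough to guarantee $m-1$ simultaneously removable edges when $m$ is large, and in particular Mader's bound $e(H_C)\le 2(|V(C)|-1)$ goes the wrong way here. To close the gap one must exploit the \emph{edge-level} overlap: each edge $xy$ of $T_i$ comes with an explicit redundancy certificate, the cycle $xy+P$ with $P\subseteq T_q$ and $q\ne i$. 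I would either select $m-1$ such certificates that remain valid simultaneously, so that the corresponding edges can be deleted together without destroying $2$-edge-connectivity, or induct on $m$ by removing the edges of one tree from $C$ and verifying that $2$-edge-connectivity and the overlap property survive. Establishing that such a simultaneous deletion exists is, I expect, the delicate technical heart of the claim.
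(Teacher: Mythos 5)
Your structural setup is sound: the observations that the tree classes form a self-contained system (the backup path for an edge of a tree class must lie in another tree class, by Claim \ref{3-2} and simplicity of $G$), that their union is bridgeless, that each component $C$ of that union is $2$-edge-connected and contains $m\geq 2$ tree classes, and that your recoloring preserves the $MC_2$ property, are all correct. But there is a genuine gap, which you yourself flag: the entire contradiction rests on the inequality $e(C)-e(H_C)\geq m-1$, where $H_C$ is a minimum $2$-edge-connected spanning subgraph of $C$, and you never prove it. Your density bound $e(C)\geq 2|V(C)|-m$ combined with Mader's bound $e(H_C)\leq 2(|V(C)|-1)$ gives only $e(C)-e(H_C)\geq 2-m$, which is vacuous for $m\geq 3$; so as written there is no argument for the key step, only two suggested directions (select $m-1$ simultaneously valid deletion certificates, or induct on $m$) that are not carried out. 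Without that inequality the recoloring may strictly decrease the number of colors, and no contradiction with extremality or goodness follows.

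The paper closes exactly this gap by a local construction that never compares against a minimum $2$-edge-connected spanning subgraph. Starting from an edge $e_1$ of a tree class, it follows backup paths from tree to tree, producing a cyclic sequence of distinct tree classes $T_1,T_2,\dots,T_{d-1}$ (with $d-1\geq 2$) together with connecting edges; from this chain it extracts a closed trail $T$ (hence a $2$-edge-connected subgraph) and an edge set $B$ with $|B|=d-2$ such that $H'=\bigcup_{i\in[d-1]}T_i\setminus B$ is connected. Recoloring $H'$ with one of the old colors and the $d-2$ edges of $B$ with fresh trivial colors keeps the total number of colors exactly equal ($d-1$ before and after) while increasing the number of trivial edges, contradicting that $\Gamma$ is good. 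The point is that the number of freed edges ($d-2$) is tied combinatorially, by the chain itself, to the number of tree colors consumed ($d-1$); this is precisely the quantitative relation your approach needs but cannot obtain from counting alone. If you want to salvage your plan, your ``simultaneous certificates'' idea essentially has to become this chain construction.
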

\begin{proof}
If $h\neq 0$, for an edge $e_1=v_1u_1\in E(T_1)$, because $P_1=e_1=v_1u_1$ is the only $v_1u_1$-path of $T_1$, there exists another $v_1u_1$-path $P_2$, then $|P_2|\geq2$ (because $G$ is simple),
and therefore the color of $P_2$ is non-trivial. By Claim \ref{3-2}, $P_2$ belongs to some $T_j$, w.l.o.g., suppose $j=2$. Then $e_1+T_2$ contains a unique cycle $C_1$.
Let $f_1=v_1u_2$ is a pendent edge of $P_2$, and $e_2=v_2u_2$ is the edge adjacent to $f_1$ in $P_2$. Then there exists a $v_2u_2$-path $P_3$ in $T_3$ and $e_2+T_3$ contains a unique cycle $C_2$.
Let $f_2=v_2u_3$ is a pendent edge of $P_3$, and $e_3=v_3u_3$ is the edge adjacent to $f_2$ in $P_3$. By repeating the process, we get a series of trees $T_1,T_2,\cdots$, paths $P_1,P_2,\cdots$ and edges $f_1=v_1u_2,f_2=v_2u_3,\cdots$, etc. Because there are at most $h<\infty$ trees, there is a $T_d$ which is the first tree appearing before (w.l.o.g., suppose $T_d=T_1$), and the $v_{d-1}u_{d-1}$-path $P_d$
is contained in $T_d=T_1$. Because there are at least two trees in this sequence, we have $d-1\geq2$. Then $f_1\in T_2,f_2\in T_3,\cdots,f_{d-2}\in T_{d-1}$; $P_2\in T_2,P_3\in T_3,\cdots,P_{d}\in T_{d}=T_1$, etc.  $T_1,\cdots,T_{d-1}$ are different trees. Let $H=\bigcup_{i\in [d-1]}T_i$.

In order to complete the proof, we need to construct a $2$-edge-connected subgraph $T$ of $H$, a connected graph $H'$, and an edge set $B$ of $H$ with $|B|=d-2$ below.

{\em Case 1}: $e_1\notin E(P_d)$.

We have already discussed above that $C_1=P_2+e_1, C_2=P_3+e_2, \cdots,C_{d-1}=P_d+e_{d-1}$. So, $T=C_1+C_2-e_2+C_3-e_3+\cdots+C_{d-1}-e_{d-1}=\bigcup_{i=1}^{d-1}C_i-B$ is a closed trail,
where $B=\bigcup_{i=2}^{d-1}e_i$, see Fig.\ref{fig3-3}(1). Therefore, $T$ is a $2$-edge-connected graph. Because the ends of every edge in $B$ belong to $V(T)$,
we have that $H'=\bigcup_{i\in [d-1]}T_i\backslash B$ is a connected graph.

{\em Case 2}: $e_1\in E(P_d)$.

Suppose $F_1,F_2$ are two small trees of $T_1\backslash e_1$ and let $v_1\in V(F_1)$, $u_1\in V(F_2)$. Then there is a $u_{d-1}v_1$-path $L_1$ and a $v_{d-1}u_1$-path $L_2$
(if $u_{d-1}$ connects $u_1$ and $v_{d-1}$ connects $v_1$, the situation is similar). Let
$$T'=v_1e_1u_1P_2u_2P_3u_3\cdots P_{d-2}u_{d-2}P_{d-1}u_{d-1}L_1v_1$$
and
$$T''=u_1P_2u_2P_3u_3\cdots P_{d-2}u_{d-2}P_{d-1}v_{d-1}L_2u_1.$$
It is obvious that both of $T'$ and $T''$ are closed trails and
$$T'\cap T''=u_1P_2u_2\cdots P_{d-2}u_{d-2}P_{d-1}v_{d-1}$$
is a trail. Therefore, $T=T'\cup T''=\bigcup_{i=1}^{d-1}C_i-B$ is a $2$-edge-connected graph, where $B=\bigcup_{i=1}^{d-2}f_i$, see Fig.\ref{fig3-3}(2).
Because the ends of each edge in $B$ belong to $V(T)$, $H'=\bigcup_{i\in [d-1]}T_i\backslash B$ is a connected graph.

\begin{figure}[h]
    \centering
    \includegraphics[width=350pt]{ph.eps}\\
    \caption{} \label{fig3-3}
\end{figure}

In above two cases, $T$ is a $2$-edge-connected subgraph of $H$, and $B$ is an edge set of $H$ with $|B|=d-2$.
We recolor each edges of $H-B$ by $1$ and recolor each edge of $B$ by  different new colors, denote the new coloring of $G$ by $\Gamma'$.
Then the total number of colors is not changed, but the number of trivial colors is increased by $|B|=d-2\geq1$. In order to complete the proof by contradiction,
we need to prove that $\Gamma'$ is an $MC_2$-coloring, i.e., we need to prove that for two distinct vertices $x,y$ of $G$, there are $2$ edge-disjoint $xy$-paths
under $\Gamma'$. There are three cases to discuss.

(\uppercase\expandafter{\romannumeral1})
 At least one of $x,y$ does not belong to $V(H)$. Then the two $xy$-paths do not belong to any $T_1,\cdots,T_{d-1}$.
 Because we just change the colors of $T_1,\cdots,T_{d-1}$, the two $xy$-paths are not affected from $\Gamma$ to $\Gamma'$.

(\uppercase\expandafter{\romannumeral2})
Both of $x,y$ belong to $V(H)$, but at least one of them does not belong to $V(T)$.

If there is just one $xy$-path in $H$ under $\Gamma$, then another $xy$-path will not be affected. Because $H'$ is connected, there are also two edge-disjoint $xy$-paths under $\Gamma'$.

If there are two $xy$-paths $L_1,L_2$ in $H$ under $\Gamma$. Suppose $a_i$ is the first vertex of $L_i$ contained in $V(T)$ from $x$ to $y$, and $b_i$ is the last vertex of $L_i$
contained in $V(T)$ from $x$ to $y$, $i=1,2$. Let $Q_i=xL_ia_i$ and $Q_{i+2}=b_iL_iy$, $i=1,2$. Because $T$ is a $2$-edge-connected graph,
$T\cup \bigcup_{i\in[4]}Q_i$ is also a $2$-edge-connected graph, i.e., there are two edge-disjoint $xy$-paths under $\Gamma'$.

(\uppercase\expandafter{\romannumeral3})
Both of $x,y$ belong to $V(T)$. Then because $T$ is a $2$-edge-connected graph, there are two edge-disjoint $xy$-path under $\Gamma'$.

\end{proof}

\begin{claim} \label{3-4}
$s=1$, i.e., all the non-trivial edges belong to $G_1$.
\end{claim}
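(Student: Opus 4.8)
The plan is to argue by contradiction: assuming $s\ge 2$, I will produce from $\Gamma$ a new $MC_2$-coloring that uses the same number of colors but has strictly more trivial edges, contradicting that $\Gamma$ is good (recall colors $=$ trivial edges $+\,s$, so lowering $s$ by one while fixing the color count forces one extra trivial edge). Fix two of the color classes, which by Claim \ref{3-3} are both $2$-edge-connected, say $G_1$ and $G_2$. The aim of the recoloring is to fuse them into a single color while turning at least one edge into a fresh trivial color, so that the number of non-trivial classes drops by one and the total number of colors is preserved.

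The first step is to locate the right object to recolor. Since $G$ is $2$-edge-connected, contracting $G_1$ and $G_2$ each to a single vertex keeps the graph $2$-edge-connected, so the edge version of Menger's theorem yields two edge-disjoint paths $R_1,R_2$ joining $V(G_1)$ to $V(G_2)$ with interiors disjoint from $V(G_1)\cup V(G_2)$. Setting $T=G_1\cup G_2\cup R_1\cup R_2$, the fact that two $2$-edge-connected graphs linked by two edge-disjoint paths form a $2$-edge-connected graph shows $T$ is $2$-edge-connected. Moreover, applying the $MC_2$-property to a vertex $u$ of $G_1$ and a vertex $w$ of $G_2$ gives two edge-disjoint monochromatic $uw$-paths, each of a single color different from $1$ and $2$; I would take $R_1,R_2$ to be these monochromatic paths, and, whenever a connector meets a third class $G_j$, enlarge $T$ so that it swallows all of $G_j$ rather than cutting through it, which prevents the recoloring from silently destroying another class.

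Next I recolor: choose a minimal $2$-edge-connected spanning subgraph $T^{*}$ of $T$, give every edge of $T^{*}$ the color $1$, give each edge of $T\setminus T^{*}$ a new distinct trivial color, and leave all edges outside $T$ untouched. This removes color $2$, since its edges now lie in the color-$1$ class, so the number of non-trivial classes decreases. To check the result is still an $MC_2$-coloring I would invoke Lemma \ref{c2} with the $2$-edge-connected subgraph taken to be $T^{*}$: for any pair with a vertex outside $V(T)$ the original two monochromatic paths survive, and any excursion into $V(T)$ can be rerouted because $T^{*}$ is $2$-edge-connected; for pairs inside $V(T)$ the color-$1$ class already supplies two edge-disjoint monochromatic paths.

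The decisive step, and the main obstacle, is the color count: the colors lost by absorbing $G_2$ and the connectors must be balanced exactly by the fresh trivial colors placed on $T\setminus T^{*}$, and this balance is favorable only if $T$ is \emph{not} minimally $2$-edge-connected, i.e. only if $T^{*}\ne T$ so that at least one edge is freed. This is genuinely delicate, because edge-poor configurations exist: two cycles joined by two edges are minimally $2$-edge-connected and admit no deletable edge. The way out is to lean harder on the $MC_2$-hypothesis, since such edge-poor configurations are never $MC_2$-colorings in the first place (in the two-cycles example there is no second monochromatic path between the two cycles). Thus the monochromatic connectors $R_1,R_2$ forced by the $MC_2$-property, together with the classes they must pass through, inject enough edges into $T$ to guarantee $e(T)>e(T^{*})$. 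Making this quantitative, namely showing that the merged $2$-edge-connected union always carries strictly more edges than a minimal $2$-edge-connected spanning subgraph of it, is the heart of the argument and the step I expect to require the most care.
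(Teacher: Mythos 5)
Your high-level strategy (merge color classes to reduce $s$, pay for the lost colors with freshly created trivial edges, and contradict extremality or goodness) is the same style of recoloring argument the paper uses, but your implementation has a genuine gap at exactly the step you yourself flag as ``the heart of the argument,'' and the required inequality is stronger than the one you state. If your recoloring absorbs $m\geq 2$ non-trivial classes and $q$ trivial connector edges into the single color of $T^{*}$, you destroy $m+q$ colors and create $1+e(T)-e(T^{*})$ colors, so preserving the color count needs $e(T)-e(T^{*})\geq m+q-1$, not merely $e(T)>e(T^{*})$. This inequality is false for natural choices of $T$: take $G_1$ a triangle on $\{u_1,a,u_1'\}$, $G_2$ a triangle on $\{w_1,c,w_1'\}$, and $G_3$ a $4$-cycle $u_1'xw_1'yu_1'$. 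If you apply the $MC_2$-property to the pair $(u_1',w_1')$, the two monochromatic connectors are the two halves of $G_3$, so $T=G_1\cup G_2\cup G_3$ with $m=3$, $q=0$; but every edge of $T$ is incident to a vertex of degree $2$ in $T$, so $T$ is minimally $2$-edge-connected, $e(T)-e(T^{*})=0$, and your recoloring strictly loses colors. Your proposed escape --- that such ``edge-poor'' configurations cannot occur in an $MC_2$-coloring --- confuses a local statement with a global one: the pairs that lack monochromatic paths inside $T$ (such as $(u_1,w_1)$) can be served by classes of $G$ lying outside $T$, so the configuration is perfectly compatible with a global $MC_2$-coloring, and your recoloring has no mechanism to harvest edges from those outside classes. (A secondary unaddressed point: your claim that the connectors avoid colors $1$ and $2$ requires choosing $u\in V(G_1)\setminus V(G_2)$ and $w\in V(G_2)\setminus V(G_1)$, and the degenerate case $V(G_1)\subseteq V(G_2)$ must be handled separately.)

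The paper's proof sidesteps this counting problem entirely by freeing a whole class rather than the excess of a merged subgraph over a minimal spanning one. It first shows $V(G_1)\setminus V(G_2)\neq\emptyset$ (otherwise recoloring all of $G_1$ with fresh colors contradicts extremality), picks $a\in V(G_1)\setminus V(G_2)$, and notes that every vertex $w$ of $G_2$ is joined to $a$ by a monochromatic path whose class necessarily contains $a$; hence the classes containing $a$ cover $V(G_2)$, and a minimal subcover consists of $t\leq|V(G_2)|$ classes all sharing the vertex $a$ (so their union is $2$-edge-connected and contains $V(G_2)$). Recoloring those $t$ classes with one color and giving every edge of $G_2$ a distinct new trivial color loses $t$ colors but gains $e(G_2)$ trivial ones, and $e(G_2)\geq|V(G_2)|\geq t$ holds automatically because $G_2$ is $2$-edge-connected. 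Thus the color count never drops, the number of trivial edges strictly increases, and goodness is contradicted --- with no need for a minimal $2$-edge-connected spanning subgraph $T^{*}$ or any lower bound on freed edges. To salvage your route you would need to prove a quantitative lower bound on $e(T)-e(T^{*})$ in terms of the number of absorbed colors, which, as the example above shows, cannot be done for a $T$ built from a single pair's connectors; the paper's choice of what to free is precisely what makes the arithmetic trivial.
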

\begin{proof}
The proof is done by contradiction.
If $s\geq2$, by Claim \ref{3-1}, each $G_i$ is a $2$-edge-connected graph. Thus, $V(G_1)\backslash V(G_2)\neq\emptyset$ and $V(G_2)\backslash V(G_1)\neq\emptyset$; for otherwise, w.l.o,g,  suppose $V(G_1)\subseteq V(G_2)$. Recoloring all the
edges of $G_1$ by different new colors, then the new coloring is an $MC_2$-coloring of $G$ but it has more colors than $\Gamma$, which contradicts that $\Gamma$ is extremal.

Let $a\in V(G_1)\backslash V(G_2)$ and $b\in V(G_2)\backslash V(G_1)$.
Suppose $G_a=\bigcup_{i\in c_a}G_i$ where $c_a=\{i:~a\in V(G_i)\}$. Let $t$ be the minimum integer such that $V(G_2)\subseteq V(\bigcup_{j\in [t]}G_{i_j})$ where $i_j\in c_a$. Then $t\leq |G_2|$. Recoloring the
edges of each $G_{i_j}$ by $i_1$, and recoloring the edges of $G_2$ by different new colors. Then the new coloring is an $MC_2$-coloring of $G$. Because $e(G_2)\geq |G_2|\geq t$, the number of colors is not decreased. However, the number of trivial colors
is increased, which contradicts that $\Gamma$ is good.
\end{proof}

\begin{claim} \label{3-41}
$G_1$ is a minimum $2$-edge-connected spanning subgraph of $G$.
\end{claim}
\begin{proof}
Because $s=1$ and $h=0$, there is just one non-trivial color (call it $1$). Then $G_1$ is a  $2$-edge-connected spanning subgraph of $G$; for otherwise,
there is a vertex $w\notin V(G_1)$, and then there is just one $uw$-path (which is a trivial path) for any $u\in V(G_1)$, a contradiction.

If $G_1$ is not minimum, we can choose a minimum $2$-edge-connected spanning subgraph $H$ of $G$ with $e(G_1)>e(H)$. Coloring each edge of $H$ by a same color
and coloring the other edges by trivial colors. Then the new coloring is an $MC_2$-coloring of $G$, but there are more colors than $\Gamma$, which contradicts
that $\Gamma$ is extremal.
\end{proof}

{\bf Proof of Theorem \ref{mc1}}: Actually, the theorem can be proved directly by Claims \ref{3-3}, \ref{3-4} and \ref{3-41}. Because $\Gamma$ is an
extremal $MC_2$-coloring of $G$, and the non-trivial color-inducted subgraph is just $G_1$, which is a minimum $2$-edge-connected spanning subgraph of $G$.
So, $mc_2(G)=e(G)-e(H)+1$ where $H$ is a minimum $2$-edge-connected spanning subgraph of $G$.
$\blacksquare$

We have proved that if $\Gamma$ is a coloring of $G$ in Theorem \ref{mc1}, then there is just one non-trivial color $1$ and $H=G_1$ is a minimum
$2$-edge-connected spanning subgraph of $G$. If $G$ has $t$ blocks, then $H$ also has $t$ blocks, and each block is a minimum $2$-edge-connected
spanning subgraph of the corresponding block of $G$. Furthermore, the number of edges of $H$ is greater than or equal to $n+t-1$ (equality holds if
each block of $H$ is a cycle). So, the following result is obvious.
\begin{corollary}
If $G$ is a $2$-edge-connected graph with $t$ blocks $B_1,\cdots,B_t$, then $mc_2(G)=\sum_{i\in [t]}mc_2(B_i)-t+1$, and $mc_2(G)\leq e(G)-n-t+2$.
\end{corollary}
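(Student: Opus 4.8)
The plan is to deduce the corollary from Theorem~\ref{mc1} by showing that a minimum $2$-edge-connected spanning subgraph of $G$ respects the block structure of $G$. The central structural claim I would prove is: if $H$ is any $2$-edge-connected spanning subgraph of $G$ and $H_i$ denotes the subgraph of $H$ formed by the edges lying in the block $B_i$, then each $H_i$ is a $2$-edge-connected spanning subgraph of $B_i$; conversely, gluing a $2$-edge-connected spanning subgraph of each block along the cut vertices produces a $2$-edge-connected spanning subgraph of $G$. Since the blocks partition $E(G)$, we have $e(H)=\sum_{i\in[t]}e(H_i)$, so minimizing $e(H)$ amounts to minimizing each $e(H_i)$ separately. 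Consequently, a minimum $2$-edge-connected spanning subgraph $H$ of $G$ splits into blocks $H_1,\dots,H_t$ with $H_i$ a minimum $2$-edge-connected spanning subgraph of $B_i$.

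For the structural claim the key observations are: first, in a simple $2$-edge-connected graph every block is $2$-connected, so every cycle of $G$ lies entirely inside one block; second, a cut vertex of $G$ contained in $B_i$ must keep an $H$-edge inside $B_i$, since otherwise the remaining vertices of $B_i$ would be cut off from the rest of $H$; third, any $H$-path between two vertices of $B_i$ can be replaced by a path inside $H_i$, because each excursion outside $B_i$ must leave and return through the same cut vertex and can therefore be deleted. These three facts together show that each $H_i$ is spanning, connected, and bridgeless, i.e.\ $2$-edge-connected; the converse direction holds because any edge of $H_i$ lies on a cycle of $H_i\subseteq H$ and the glued graph is connected through the cut vertices.

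Granting the structural claim, the equality is bookkeeping. Theorem~\ref{mc1} applied to $G$ gives $mc_2(G)=e(G)-e(H)+1$, and applied to each (itself $2$-edge-connected) block gives $mc_2(B_i)=e(B_i)-e(H_i)+1$. Summing the latter over $i$ and using $e(G)=\sum_i e(B_i)$ together with $e(H)=\sum_i e(H_i)$ yields $\sum_{i\in[t]}mc_2(B_i)=e(G)-e(H)+t$, hence $\sum_{i\in[t]}mc_2(B_i)-t+1=e(G)-e(H)+1=mc_2(G)$.

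For the upper bound I would bound $e(H)$ from below. Each $H_i$ is $2$-edge-connected, so has minimum degree at least $2$ and thus $e(H_i)\ge |V(B_i)|$, with equality exactly when $H_i$ is a cycle. To sum $|V(B_i)|$ I would pass to the block--cut tree: writing $c$ for the number of cut vertices, a non-cut vertex lies in exactly one block while a cut vertex lies in as many blocks as its degree in that tree, and the cut-vertex degrees sum to the $t+c-1$ edges of the tree, so $\sum_{i\in[t]}|V(B_i)|=(n-c)+(t+c-1)=n+t-1$. Therefore $e(H)\ge n+t-1$ and $mc_2(G)=e(G)-e(H)+1\le e(G)-n-t+2$. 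The main obstacle is the structural claim, especially the direction asserting that an \emph{arbitrary} minimum $2$-edge-connected spanning subgraph cannot evade the block decomposition; once that is secured, the two identities above are routine.
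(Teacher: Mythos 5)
Your proposal is correct and follows essentially the same route as the paper: the paper likewise derives the corollary from Theorem~\ref{mc1} by observing that the single non-trivial color class $H$ (a minimum $2$-edge-connected spanning subgraph of $G$) decomposes into $t$ blocks, each a minimum $2$-edge-connected spanning subgraph of the corresponding block of $G$, and that $e(H)\geq n+t-1$. The only difference is that the paper states these facts without justification, while you supply the missing details (the block-decomposition claim and the block--cut-tree count), which is a faithful filling-in rather than a different approach.
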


A {\em cactus} is a connected graph where every edge lies in at most one cycle. If $G$ is a cactus without cut edges, then every edge lies in exactly one cycle.
It is obvious that $G$ will have cut edge when deleting any edge, and so $G$ is a minimal $2$-edge-connected graph. A minimal $k$-edge-connected graph is also the
minimum $k$-edge-connected spanning subgraph of itself, and this fact will not be declared again later.
\begin{corollary}
If $G$ is a cactus without cut edge, then $mc_2(G)=1$.
\end{corollary}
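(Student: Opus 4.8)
The plan is to apply Theorem~\ref{mc1} directly, since a cactus without cut edges is a $2$-edge-connected graph and we have an exact formula $mc_2(G)=e(G)-e(H)+1$ where $H$ is a minimum $2$-edge-connected spanning subgraph of $G$. The entire task reduces to computing $e(G)-e(H)$ for such a cactus, and the claim will follow once I show that $e(G)-e(H)=0$, i.e., that $G$ is \emph{its own} minimum $2$-edge-connected spanning subgraph. The paper has already flagged this: the remark immediately preceding the corollary observes that a cactus without cut edges is \emph{minimal} $2$-edge-connected (deleting any edge creates a cut edge, destroying $2$-edge-connectivity), and that a minimal $k$-edge-connected graph is the minimum $k$-edge-connected spanning subgraph of itself.

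First I would verify the structural fact that in a cactus without cut edges, every edge lies in exactly one cycle. By definition a cactus has every edge in \emph{at most} one cycle; the absence of cut edges forces each edge into \emph{at least} one cycle (a cut edge is precisely an edge lying in no cycle), so each edge lies in exactly one. Next I would argue $G$ is minimal $2$-edge-connected: deleting any edge $e$ breaks the unique cycle containing $e$, turning the remaining edges of that cycle into bridges (cut edges), so $G\backslash e$ is no longer $2$-edge-connected. Hence no edge of $G$ is deletable, and $G$ is minimal $2$-edge-connected.

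Then I would invoke the stated fact that a minimal (hence minimum) $2$-edge-connected graph is the minimum $2$-edge-connected spanning subgraph of itself, so we may take $H=G$, giving $e(G)-e(H)=0$. Applying Theorem~\ref{mc1} yields
\[
mc_2(G)=e(G)-e(H)+1=0+1=1,
\]
which is the desired conclusion.

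I do not expect a genuine obstacle here: the result is essentially a one-line corollary of Theorem~\ref{mc1} combined with the preceding structural remark, and all the needed pieces are already assembled in the excerpt. The only point requiring a small amount of care is the justification that a cactus without cut edges is genuinely minimal $2$-edge-connected (rather than merely $2$-edge-connected), but this is immediate from the unique-cycle property and is exactly the observation the authors have pre-packaged in the sentences introducing the corollary.
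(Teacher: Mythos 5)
Your proof is correct and follows essentially the same route as the paper: the paper's own justification is exactly the remark that a cactus without cut edges is minimal $2$-edge-connected (hence its own minimum $2$-edge-connected spanning subgraph), followed by an application of Theorem~\ref{mc1}. You have merely filled in the small structural details that the paper leaves as "obvious."
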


We have proved the first result of Theorem \ref{T2}. Next we will prove the remaining two results. Before this, we
give an upper bound of $mc_k(G)$ for $G$ being a minimal $k$-edge-connected graph. The following lemma is necessary for our later proof.
\begin{lemma} \label{total}
Let $G$ be a minimal $k$-edge-connected graph and $\Gamma$ be an extremal $MC_k$-coloring of $G$ (suppose $mc_k(G)=t$),
and let $G_i$ be the subgraph induced by the edges of color $i$, $1\leq i\leq t$. Then each $G_i$ is a spanning subgraph of $G$.
\end{lemma}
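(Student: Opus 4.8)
The plan is to prove the stronger fact that \emph{every} color class is spanning by localizing to a single edge of that color and exploiting the fact that, in a minimal $k$-edge-connected graph, this edge lies in a cut of exactly $k$ edges. Fix a color $i$ and choose any edge $e=xy$ with $\Gamma(e)=i$ (such an edge exists since $i\in[t]$ is a used color). By Theorem~\ref{Mader} (every edge lies in a $k$-edge-cut), $e$ is contained in a cut of exactly $k$ edges; equivalently there is a bipartition $(A,B)$ of $V(G)$ with $x\in A$, $y\in B$ such that exactly $k$ edges run between $A$ and $B$, one of which is $e$. Call these cut edges $f_1=e,f_2,\dots,f_k$.

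The heart of the argument is a counting step on this cut. First I would fix arbitrary vertices $a\in A$ and $b\in B$ and invoke the $MC_k$-coloring to obtain $k$ edge-disjoint monochromatic $ab$-paths. Since $a$ and $b$ lie on opposite sides of the cut, each such path crosses $(A,B)$ an odd number of times, hence at least once. The paths are pairwise edge-disjoint and there are only $k$ cut edges available, so the total number of crossings is at most $k$; with $k$ paths each crossing at least once, every path crosses exactly once and the assignment of paths to cut edges is a bijection. In particular, exactly one of the $k$ paths uses $e=f_1$. Being monochromatic and containing the color-$i$ edge $e$, that entire path is colored $i$, so it is a path of $G_i$ joining $a$ to $b$; consequently $a,b\in V(G_i)$.

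Since $a\in A$ and $b\in B$ were arbitrary, this forces $A\cup B\subseteq V(G_i)$, i.e.\ $V(G_i)=V(G)$, so $G_i$ is spanning. Running the same argument for each color $i\in[t]$ finishes the proof. The step I expect to require the most care is the crossing count: I must argue cleanly that each of the $k$ monochromatic paths meets the cut in exactly one edge, combining the parity observation that an $A$--$B$ path crosses the cut an odd (hence positive) number of times with edge-disjointness and the fact that the cut has exactly $k$ edges. This is precisely what pins the color-$i$ path to the specific edge $e$, and thereby to the pair $a,b$. It is worth noting that this argument uses only that $\Gamma$ is an $MC_k$-coloring and that $G$ is minimal $k$-edge-connected (through Theorem~\ref{Mader}); extremality of $\Gamma$ is not actually needed.
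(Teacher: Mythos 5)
Your proof is correct and is essentially the same argument as the paper's: both fix an edge $e$ of color $i$, invoke Mader's theorem (Theorem~\ref{Mader}) to place $e$ in a $k$-edge-cut, and count the $k$ edge-disjoint monochromatic paths against the $k$ cut edges. The only difference is presentational: you argue directly that the path through $e$ is colored $i$ (so both endpoints lie in $V(G_i)$), while the paper argues by contradiction that a vertex outside $V(G_i)$ would force $k$ edge-disjoint paths through the remaining $k-1$ cut edges; your observation that extremality of $\Gamma$ is never used also holds for the paper's proof.
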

\begin{proof}
We prove it by contradiction. Suppose $G_i$ is not a spanning subgraph of $G$. Let $v\notin V(G_i)$. Then for any $u\neq v$,
none of the $k$ edge-disjoint monochromatic $uv$-paths is colored by $i$. Let $e$ be an edge colored by $i$. By Theorem \ref{Mader},
there exists an edge cut $C(G)$ such that $e\in C(G)$ and $|C(G)|=k$. Then $G\backslash C(G)$ has two components $M_1,M_2$ (in fact,
$C(G)$ is a bond of $G$). Let $v\in V(M_1)$ and some $w\in V(M_2)$. Then the $k$ edge-disjoint monochromatic $vw$-paths are retained
in $G\backslash e$. However, $C(G)\backslash e$ is an edge cut of $G\backslash e$ that separates $v$ and $w$, and $|C(G)\backslash e|=k-1$,
which contradicts that there are $k$ edge-disjoint monochromatic $vw$-paths in $G\backslash e$.
\end{proof}

\begin{theorem} \label{nk}
If $G$ is a minimal $k$-edge-connected graph with $k\geq2$, then $mc_k(G)\leq k-1$.
\end{theorem}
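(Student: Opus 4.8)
The plan is to combine a global edge count with a local argument at a vertex of degree $k$. Let $\Gamma$ be an extremal $MC_k$-coloring of $G$ realizing $mc_k(G)=t$ colors, and let $G_1,\dots,G_t$ be the color classes. By the observation preceding Conjecture \ref{coj}, each $G_i$ is connected, and by Lemma \ref{total} each $G_i$ is spanning; hence $e(G_i)\ge n-1$, where $n=|V(G)|$. Since the color classes partition $E(G)$,
$$e(G)=\sum_{i=1}^{t}e(G_i)\ge t(n-1),$$
while Theorem \ref{Mader}(1) gives $e(G)\le k(n-1)$. As $n\ge 2$, this already yields $t\le k$.

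It remains to exclude $t=k$. If $t=k$, the two estimates are forced to be equalities, so $e(G)=k(n-1)$ and $e(G_i)=n-1$ for every $i$; a connected spanning subgraph with $n-1$ edges is a spanning tree, so $G$ decomposes into $k$ edge-disjoint spanning trees. I will therefore prove the purely structural statement that \emph{a minimally $k$-edge-connected graph with $k\ge 2$ admits no $k$ edge-disjoint spanning trees}, which contradicts $t=k$ and finishes the proof.

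For the structural statement I would take a counterexample $G$ with the fewest vertices. By Theorem \ref{Mader}(3), $G$ has a vertex $v$ with $\deg_G(v)=k$. Since each of the $k$ spanning trees is spanning it meets $v$ with degree at least $1$, and the degrees sum to $\deg_G(v)=k$, so $v$ is a leaf of every tree; deleting it leaves $k$ edge-disjoint spanning trees of $G-v$, whence $G-v$ is again $k$-edge-connected and decomposes into $k$ spanning trees. Now there are two cases. If $G-v$ is itself minimally $k$-edge-connected, it is a smaller counterexample, contradicting the minimality of $G$. Otherwise $G-v$ has a deletable edge $f$, necessarily not incident to $v$.

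The crux is to lift this deletability back to $G$. The key sublemma I will establish is that \emph{adjoining a new vertex of degree $k$ to a $k$-edge-connected graph, joined to $k$ distinct vertices, again yields a $k$-edge-connected graph}: inspecting an arbitrary cut $\partial(S)$ in the three cases $S=\{v\}$, $v\in S$ with $S\neq\{v\}$, and $v\notin S$ reduces every nontrivial cut to a cut of the base graph and so bounds it below by $k$. Applying this with base graph $(G-v)\setminus f$, which is $k$-edge-connected because $f$ is deletable in $G-v$, shows that $G\setminus f=((G-v)\setminus f)+v$ is $k$-edge-connected, so $f$ is a deletable edge of $G$, contradicting the minimality of $G$. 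Either way a contradiction results, so no counterexample exists and $t\le k-1$. The step I expect to be the main obstacle is precisely this last one: the counting only gives $t\le k$, and all the difficulty lies in ruling out the boundary case $t=k$. The lifting sublemma is what makes the descent on $|V(G)|$ go through, and carefully verifying it across all cut types is where the real work is.
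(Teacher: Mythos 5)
Your proof is correct, and up to the point where the boundary case $t=k$ forces $G$ to decompose into $k$ edge-disjoint spanning trees it runs parallel to the paper: both arguments rest on Lemma \ref{total} together with Theorem \ref{Mader} (the paper deduces $t\le k$ from the degree-$k$ vertex rather than from your count $t(n-1)\le e(G)\le k(n-1)$, but that difference is cosmetic). The divergence is in how the case $t=k$ is excluded. The paper stays inside the coloring framework: it takes an edge $e=uv$ of the spanning tree $G_1$, a terminal edge $e_1$ of the $uv$-path in $G_2$, shows via a closed-trail construction and Lemma \ref{c2} that $(G_1\cup G_2)\setminus e_1$ is still a $2$-edge-connected spanning subgraph, and then recolors $G_2\setminus e_1$ with color $1$ and gives $e_1$ a fresh color; the result is again an extremal $MC_k$-coloring, but one having a single-edge (hence non-spanning) color class, contradicting Lemma \ref{total}. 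You instead prove the purely structural statement that a minimally $k$-edge-connected graph with $k\ge 2$ admits no $k$ edge-disjoint spanning trees, by a minimal-counterexample descent: Mader gives a degree-$k$ vertex $v$, which must be a leaf of every tree, so $G-v$ inherits $k$ edge-disjoint spanning trees (hence is $k$-edge-connected); either $G-v$ is a smaller counterexample, or it has a deletable edge $f$, and your lifting sublemma (attaching a new degree-$k$ vertex to a $k$-edge-connected graph preserves $k$-edge-connectivity, which your three-case cut analysis does verify) shows $f$ is already deletable in $G$, contradicting minimality. Your route produces a self-contained graph-theoretic lemma of independent interest and avoids the delicate re-verification that a modified coloring is still an $MC_k$-coloring, which is where the paper's argument does its real work; the paper's route buys a shorter proof by reusing machinery (Lemma \ref{c2}, the extremal/good coloring formalism) it has already set up. Both yield complete proofs of $mc_k(G)\le k-1$.
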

\begin{proof}
We prove it by contradiction. Suppose $mc_k(H)\geq k$. Let $\Gamma$ be an extremal $MC_k$-coloring of $G$.
Then by Lemma \ref{total}, there are at least $k$ edge-disjoint spanning subgraphs of $G$. Because there exists
a vertex of $G$ with degree $k$, there are exactly $k$ edge-disjoint spanning subgraphs of $G$, denoted by $G_1,\cdots,G_k$.
Because $G$ is a minimal $k$-edge-connected graph, by Theorem \ref{Mader}, $e(G)\leq k(n-1)$, which allows all of $G_1,\cdots,G_k$ to be spanning trees of $G$.

Because $k\geq 2$, there are at least two spanning trees $G_1, G_2$, and so $G_1\cup G_2$ is a $2$-edge-connected spanning subgraph of $G$.
Let $e=uv$ be an edge of $G_1$ and let $P_1$ be the $uv$-path of $G_2$. Suppose $e_1=uu_1$ and $e_2=vv_1$ are two terminal edges of $P_1$.
Let $P_2$ be the $uu_1$-path of $G_1$ and let $P_3$ be the $vv_1$-path of $G_1$.

{\em Case 1}: If one of $P_2$ and $P_3$ does not contain $e$, w.l.o.g., suppose $P_2$ does not contain $e$. Then $T=uP_2u_1P_1veu$ is a $2$-edge-connected graph
(in fact, $T$ is a closed trail, see Fig.\ref{minim}(1)). Because $u,u_1\in V(T)$, by Lemma \ref{c2}, $(G_1\cup G_2)\backslash e_1$ is a $2$-edge-connected subgraph
of $G$.

{\em Case 2}: If both $P_2$ and $P_3$ contain $e$,
then $T=uevP_2u_1P_1v_1P_3u$ is a $2$-edge-connected graph (in fact, $T$ is a closed trail, see Fig.\ref{minim}(2)). Because $u,u_1\in V(T)$, by Lemma \ref{c2},
$(G_1\cup G_2)\backslash e_1$ is a $2$-edge-connected subgraph of $G$.

\begin{figure}[h]
    \centering
    \includegraphics[width=350pt]{pho.eps}\\
    \caption{} \label{minim}
\end{figure}

The coloring $\Gamma'$ obtained from $\Gamma$ by assigning $1$ to the edges of $G_2\backslash e_1$ and assigning a new color to $e_1$. From above two cases,
$(G_1\cup G_2)\backslash e_1$ is a $2$-edge-connected spanning subgraph of $G$ and $G_3,\cdots,G_k$ are spanning subgraph of $G$. So, every two vertices are also connected by $k$
monochromatic paths and the number of colors is not changed, i.e., $\Gamma'$ is also an extremal $MC_k$-coloring of $G$. While $e$ is a single edge, that would contradict that each induced subgraph is spanning by Lemma \ref{total}.
\end{proof}
Before proving the second result of Theorem \ref{T2}, we introduce a well-known result.
\begin{fact} \label{signi}
$K_{2n+1}$ can be decomposed into $n$ edge-disjoint Hamiltonian cycles; $K_{2n+2}$ can be decomposed into $n$ edge-disjoint Hamiltonian cycles and a perfect matching.
\end{fact}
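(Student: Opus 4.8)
The plan is to use the classical Walecki construction, building every cycle as a rotation of a single ``zigzag'' Hamiltonian cycle and then collapsing the whole problem onto one edge-disjointness verification. I first treat the odd case $K_{2n+1}$, taking the vertex set $\{\infty\}\cup\mathbb{Z}_{2n}$ with $\mathbb{Z}_{2n}$ drawn as a regular $2n$-gon and $\infty$ at its centre. The base cycle is the zigzag
$$H_0=(\infty,\,0,\,1,\,-1,\,2,\,-2,\,\dots,\,n-1,\,-(n-1),\,n,\,\infty),$$
where $-k$ abbreviates $2n-k$, and I let $H_j$ be the image of $H_0$ under the rotation $\rho^j$ that fixes $\infty$ and sends $x\mapsto x+j\pmod{2n}$. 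I claim $H_0,\dots,H_{n-1}$ is the required decomposition. A preliminary count confirms feasibility: each $H_j$ has $2n+1$ edges, so $n$ of them account for $n(2n+1)=\binom{2n+1}{2}=e(K_{2n+1})$ edges, exactly the total, whence edge-disjointness alone forces a decomposition.

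Next I would dispose of the edges at $\infty$ cheaply. In $H_0$ the two $\infty$-edges are $\{\infty,0\}$ and $\{\infty,n\}$, so $H_j$ uses $\{\infty,j\}$ and $\{\infty,n+j\}$; as $j$ runs over $0,\dots,n-1$ these are precisely the $2n$ edges $\{\infty,i\}$, each exactly once. The genuine work is the outer edges inside $\mathbb{Z}_{2n}$, where I would apply the difference method: label an outer edge $\{a,b\}$ by its length $\ell=\min(|a-b|,\,2n-|a-b|)\in\{1,\dots,n\}$, note that $\rho^j$ preserves length, and check that the zigzag $H_0$ meets each length in a controlled pattern. The point to verify is that for every fixed $\ell$ the length-$\ell$ edges occurring across $H_0,\dots,H_{n-1}$ are pairwise distinct; a second invariant, such as the residue $a+b\bmod 2n$ that $\rho^j$ shifts by $2j$, should separate the rotates. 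This verification is the one genuinely technical step, and I expect the bookkeeping to concentrate here, particularly in the boundary case $\ell=n$ of the diameters.

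Finally I would handle the even case $K_{2n+2}$ in parallel, using the vertex set $\{\infty\}\cup\mathbb{Z}_{2n+1}$ and the analogous zigzag $(\infty,0,1,-1,\dots,n,-n,\infty)$, again rotating to form $H_0,\dots,H_{n-1}$; the same difference argument shows these $n$ cycles are edge-disjoint. The new feature is that $n$ cycles cannot exhaust $K_{2n+2}$, but once edge-disjointness is in hand a degree count finishes everything with no further construction. Every vertex has degree $2n+1$ in $K_{2n+2}$ and degree exactly $2$ in each of the $n$ cycles, so in their (edge-disjoint) union every vertex has degree $2n$ and hence leftover degree exactly $1$. Thus the uncovered edges form a $1$-regular spanning subgraph, i.e.\ a perfect matching, and the decomposition into $n$ Hamiltonian cycles together with a perfect matching follows at once. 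The main obstacle throughout is therefore the single edge-disjointness claim for the rotated zigzag cycles; the perfect matching in the even case comes for free afterwards.
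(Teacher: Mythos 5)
You should first note that the paper does not prove this statement at all: it appears as a ``Fact,'' invoked as a classical well-known result (Walecki's theorem on Hamiltonian decompositions), so there is no in-paper argument to compare yours against. Your proposal supplies the standard Walecki construction, and it is correct; moreover, the one step you defer as ``genuinely technical'' closes exactly along the line you suggest, and more cleanly than you anticipate. Every outer edge of your base cycle $H_0$ has the form $\{k,-k\}$ or $\{-k,k+1\}$, so its endpoint sum is $0$ or $1$; hence every outer edge of $H_j=\rho^j(H_0)$ has endpoint sum $2j$ or $2j+1$. For $j=0,\dots,n-1$ the pairs $\{2j,2j+1\}$ are pairwise disjoint residue sets --- modulo $2n$ because the even parts $2j$ are pairwise distinct and the odd parts $2j+1$ are pairwise distinct, and modulo $2n+1$ because all the integers $0,1,\dots,2n-1$ involved are smaller than the modulus --- so distinct rotates share no outer edge, and your separate accounting of the edges at $\infty$ handles the rest. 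In particular, the length invariant $\ell$ and the boundary case $\ell=n$ of the diameters that you worried about are not needed at all: the sum invariant alone separates everything. With edge-disjointness in hand, your counting argument ($n(2n+1)=\binom{2n+1}{2}$) finishes the odd case, and your degree argument (leftover degree $1$ at every vertex forces the uncovered edges to be a perfect matching) finishes the even case, so the proof is complete once that two-line computation on $H_0$ is written down.
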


\begin{theorem}
$mc_{2n}(K_{2n+1})=n$ for $n\geq2$.
\end{theorem}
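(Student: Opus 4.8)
The plan is to prove the two inequalities $mc_{2n}(K_{2n+1})\ge n$ and $mc_{2n}(K_{2n+1})\le n$ separately, with the lower bound given by an explicit coloring and the upper bound by an edge count.

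For the lower bound I would exhibit an $MC_{2n}$-coloring with exactly $n$ colors. By Fact \ref{signi}, $K_{2n+1}$ decomposes into $n$ edge-disjoint Hamiltonian cycles $C_1,\dots,C_n$; assign color $i$ to all edges of $C_i$. Fix two distinct vertices $u,v$. Each $C_i$ is monochromatic and splits into two edge-disjoint $uv$-arcs, giving two edge-disjoint monochromatic $uv$-paths; since the $C_i$ are pairwise edge-disjoint, the resulting $2n$ paths are pairwise edge-disjoint overall. Hence this $n$-coloring makes $K_{2n+1}$ monochromatic $2n$-edge-connected, so $mc_{2n}(K_{2n+1})\ge n$.

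For the upper bound the key observation is that $K_{2n+1}$ is $2n$-regular, so deleting any edge drops the degree of its two endpoints below $2n$; thus no edge is deletable and $K_{2n+1}$ is minimal $2n$-edge-connected, which makes Lemma \ref{total} applicable. Let $\Gamma$ be an extremal $MC_{2n}$-coloring using $t=mc_{2n}(K_{2n+1})$ colors, with color classes $G_1,\dots,G_t$. Each $G_i$ is connected (as noted for extremal colorings just before Conjecture \ref{coj}) and spanning (by Lemma \ref{total}), so $e(G_i)\ge(2n+1)-1=2n$. Summing over the colors and using $e(K_{2n+1})=\binom{2n+1}{2}=n(2n+1)$ gives $2n\,t\le\sum_{i=1}^{t}e(G_i)=n(2n+1)$, whence $t\le n+\tfrac12$, i.e. $t\le n$. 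Combining the two bounds yields $mc_{2n}(K_{2n+1})=n$.

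I expect the whole argument to reduce to this edge count, so no delicate case analysis of the sort appearing in the $k=2$ proof should be required. The single point that must be handled carefully is the applicability of Lemma \ref{total}: it is exactly the minimality of $K_{2n+1}$ as a $2n$-edge-connected graph (forced by $2n$-regularity) that guarantees every color class is a connected \emph{spanning} subgraph and therefore carries at least $2n$ edges, after which the total edge budget $n(2n+1)$ is simply too small to support more than $n$ classes. The hypothesis $n\ge2$ serves only to place the statement in the even-$k\ge4$ regime of Theorem \ref{T2}; the case $n=1$ ($K_3$) is already covered by the $k=2$ result.
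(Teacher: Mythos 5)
Your proof is correct. The lower bound is exactly the paper's argument: color the $n$ Hamiltonian cycles from Fact \ref{signi} with distinct colors, each cycle supplying two edge-disjoint monochromatic paths between any pair. For the upper bound you use the same two ingredients as the paper --- minimality of $K_{2n+1}$ (so Lemma \ref{total} applies and every color class is spanning) together with connectivity of each color class in an extremal coloring --- but your execution is genuinely cleaner. The paper argues by contradiction from $t\geq n+1$ and splits into two cases: for $t\geq 2n$ it uses precisely your crude count $e(G_i)\geq 2n$ per class, but for $n+1\leq t<2n$ it resorts to a finer argument, claiming that at least $2n-t$ of the classes must be $2$-edge-connected (hence carry at least $2n+1$ edges each) in order to supply $2n$ edge-disjoint paths, which yields $e\bigl(K_{2n+1}\bigr)\geq t(2n-1)+2n$, a contradiction. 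You noticed that the crude count alone finishes the job: summing $e(G_i)\geq 2n$ over all $t$ classes gives $2nt\leq n(2n+1)$, hence $t\leq n+\tfrac{1}{2}$, and integrality of $t$ forces $t\leq n$. This bypasses the case analysis entirely, and also avoids the paper's claim about $2n-t$ classes being $2$-edge-connected, whose justification in the paper is rather terse. Your argument is sound as written: Lemma \ref{total} applies to every color class (so in particular no class can be a single edge, since one edge cannot span $2n+1\geq 5$ vertices), and the connectivity of color classes under an extremal coloring is exactly the remark the paper makes before Conjecture \ref{coj}, so both facts you invoke are available. Your closing observations (that $2n$-regularity makes $K_{2n+1}$ minimally $2n$-edge-connected, and that the case $n=1$ is subsumed by the $k=2$ theorem) are also correct.
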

\begin{proof}
By Fact \ref{signi}, $K_{2n+1}$ can be decomposed into $n$ edge-disjoint Hamiltonian cycles $C_1,\cdots,C_n$.
Color each $C_i$ by $i\in [n]$, and then the coloring is an $MC_{2n}$-coloring of $K_{2n+1}$. So, $mc_{2n}(K_{2n+1})\geq n$.

We need to prove that $mc_{2n}(K_{2n+1})\leq n$ to complete our proof. The proof is done by contradiction.
Suppose $mc_{2n}(K_{2n+1})=t\geq n+1$. Let $\Gamma$ be an extremal $MC_{2n}$-coloring of $K_{2n+1}$ and let $G_i$ be the subgraph induced by all the edges with color $i$, $1\leq i\leq t$.

Because $K_{2n+1}$ is a minimal $2n$-edge-connected graph, by Lemma \ref{total} we have that each $G_i$ is a spanning subgraph of $G$. If $t\geq 2n$, then
$$n(2n+1)=e(K_{2n+1})=e(\bigcup_{i\in[t]}G_i)\geq 2tn\geq 4n^2,$$
which is a contradiction. Otherwise, if $t< 2n$, then not every $G_i$ is a spanning tree (for otherwise, every two vertices are just connected by $t<2n$ monochromatic paths). To ensure that
every two vertices are connected by at least $2n$ monochromatic paths, there are at least $2n-t$ $G_i$ that are $2$-edge-connected. Therefore, the number of edges of $\bigcup_{i\in[t]}G_i$ satisfies
$$e(\bigcup_{i\in[t]}G_i)\geq (2n+1)(2n-t)+2(t-n)\cdot 2n=t(2n-1)+2n\geq 2n^2+3n-1.$$
This contradicts that $\bigcup_{i\in[t]}G_i=K_{2n+1}$ and $e(K_{2n+1})=n(2n+1)$.
\end{proof}

Before prove the third result of Theorem \ref{T2}, we introduce another well-known result.
\begin{fact}\label{bip}
$K_{2n,2n}$ can be decomposed into $n$ Hamiltonian cycles and $K_{2n+1,2n+1}$ can be decomposed into $n$ Hamiltonian cycles and a perfect matching.
\end{fact}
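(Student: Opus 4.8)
The plan is to give an explicit cyclic (difference-method) decomposition. Label the two parts of $K_{m,m}$, where $m=2n$ or $m=2n+1$, as $A=\{a_0,\ldots,a_{m-1}\}$ and $B=\{b_0,\ldots,b_{m-1}\}$, reading all indices modulo $m$. For each $d\in\mathbb{Z}_m$ define the perfect matching $M_d=\{a_ib_{i+d}:i\in\mathbb{Z}_m\}$. Since every edge $a_ib_j$ lies in exactly one $M_d$, namely $d=j-i$, the matchings $M_0,\ldots,M_{m-1}$ partition $E(K_{m,m})$. The whole proof then reduces to choosing how to glue these matchings together into Hamiltonian cycles.

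The key step is to determine when the union of two such matchings is a single Hamiltonian cycle. For $d\neq d'$ the graph $M_d\cup M_{d'}$ is $2$-regular and bipartite, hence a disjoint union of even cycles. Tracing the walk through $a_0$ by alternately following edges of $M_d$ and of $M_{d'}$, one checks that after $k$ edges of each the walk returns to the $A$-side at $a_{k(d-d')}$, since each such double step shifts the $A$-index by $d-d'$. Therefore the component of $a_0$ contains exactly $m/\gcd(d-d',m)$ vertices of $A$, and so $M_d\cup M_{d'}$ is a single Hamiltonian cycle precisely when $\gcd(d-d',m)=1$.

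With this criterion in hand the decomposition is immediate. For $m=2n$, pair the matchings as $\{M_0,M_1\},\{M_2,M_3\},\ldots,\{M_{2n-2},M_{2n-1}\}$; each pair has index difference $1$, which is coprime to $m$, so every union is a Hamiltonian cycle, and the $n$ pairs exhaust all $2n$ matchings, giving the claimed decomposition of $K_{2n,2n}$ into $n$ Hamiltonian cycles. For $m=2n+1$, set aside $M_0$ (a perfect matching) and pair the remaining matchings as $\{M_1,M_2\},\ldots,\{M_{2n-1},M_{2n}\}$; again every pair has difference $1$, coprime to $m$, yielding $n$ Hamiltonian cycles together with the perfect matching $M_0$.

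The only real content is the cycle-length computation of the second paragraph; once the $\gcd$ criterion is established, the difference-$1$ pairing makes coprimality automatic and the rest is bookkeeping. The point to keep an eye on is parity: in the odd case $m=2n+1$ the vertices have odd degree, so $K_{2n+1,2n+1}$ cannot be fully cycle-decomposed, which is exactly why one matching must be left over as the perfect matching. I expect no genuine obstacle beyond presenting the walk-tracing argument cleanly.
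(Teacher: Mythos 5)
Your proof is correct. Note that the paper itself offers no proof of this statement: it is introduced as ``another well-known result'' and recorded as a Fact, so there is nothing internal to compare against. Your cyclic difference-method construction supplies exactly the missing content, and it is the standard argument: the matchings $M_d=\{a_ib_{i+d}: i\in\mathbb{Z}_m\}$ partition $E(K_{m,m})$; the walk-tracing computation correctly shows that each double step in $M_d\cup M_{d'}$ shifts the $A$-index by $d-d'$, so the union is a single Hamiltonian cycle exactly when $\gcd(d-d',m)=1$; and pairing consecutive differences $\{M_{2j},M_{2j+1}\}$ (even case) or $\{M_{2j-1},M_{2j}\}$ with $M_0$ left over (odd case) makes coprimality automatic. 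Your parity remark also correctly explains why the leftover perfect matching is unavoidable when $m$ is odd, since a cycle decomposition requires all degrees to be even. The argument is complete and self-contained; it could be inserted verbatim as a proof of the Fact.
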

\begin{theorem} \label{kkn}
If $n\geq k\geq 3$, then $mc_k(K_{k,n})\leq\lfloor\frac{k}{2}\rfloor$.
\end{theorem}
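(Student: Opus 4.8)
The plan is to begin by recognizing that $K_{k,n}$ with $n\ge k$ is minimally $k$-edge-connected, so that Lemma \ref{total} is available. Write the bipartition as $(A,B)$ with $|A|=k$ and $|B|=n$. Every edge joins $A$ to $B$, and each vertex of $B$ has degree exactly $k$, which coincides with the edge-connectivity of $K_{k,n}$; hence deleting any edge drops some $B$-vertex below degree $k$ and destroys $k$-edge-connectivity, so no edge is deletable. Let $\Gamma$ be an extremal $MC_k$-coloring with $t=mc_k(K_{k,n})$ colors, and let $G_i$ be the subgraph induced by color $i$. By the remark in the introduction each $G_i$ is connected, and by Lemma \ref{total} each $G_i$ is spanning.

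The crucial step is to pin down the degrees inside each color class. For two vertices $v,w\in B$, the $MC_k$-condition yields $\sum_i \lambda_{G_i}(v,w)\ge k$, where $\lambda_{G_i}(v,w)$ is the maximum number of edge-disjoint $vw$-paths in $G_i$ (paths of distinct colors being automatically edge-disjoint, so the total count is exactly this sum). By Menger's theorem $\lambda_{G_i}(v,w)\le \min\{\deg_{G_i}(v),\deg_{G_i}(w)\}$, and since the colors partition the edges incident to each $B$-vertex, $\sum_i \deg_{G_i}(v)=\sum_i \deg_{G_i}(w)=k$. Chaining these gives
\[
k \;\le\; \sum_i \lambda_{G_i}(v,w)\;\le\;\sum_i \min\{\deg_{G_i}(v),\deg_{G_i}(w)\}\;\le\;\sum_i \deg_{G_i}(v)\;=\;k,
\]
which forces equality term by term, so $\deg_{G_i}(v)=\deg_{G_i}(w)$ for every $i$. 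Letting $v,w$ range over $B$, each color class $G_i$ assigns a common degree $d_i$ to all of $B$, with $\sum_i d_i=k$.

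I would then finish with an edge count inside a single color class. Because every edge of $G_i$ has exactly one endpoint in $B$, we get $e(G_i)=\sum_{v\in B}\deg_{G_i}(v)=n\,d_i$. Since $G_i$ is connected and spanning on $k+n$ vertices, $e(G_i)\ge k+n-1$, hence $n\,d_i\ge n+k-1$, i.e.\ $d_i\ge 1+\frac{k-1}{n}>1$; as $d_i$ is an integer, $d_i\ge 2$. Combining with $\sum_i d_i=k$ gives $k\ge 2t$, that is, $mc_k(K_{k,n})=t\le\lfloor k/2\rfloor$.

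The main obstacle is the middle step: extracting the uniform-degree conclusion $\deg_{G_i}(v)=\deg_{G_i}(w)$ from the connectivity requirement. The subtlety is that the $MC_k$-condition asserts only the \emph{existence} of $k$ edge-disjoint monochromatic paths, so one must first identify the available count as $\sum_i \lambda_{G_i}(v,w)$ and then show this sum is squeezed to exactly $k$ by the degree constraint at the degree-$k$ vertices of $B$; once the inequalities collapse to equalities, the remainder is bookkeeping. The hypothesis $n\ge k$ enters in two places: it guarantees minimal $k$-edge-connectivity (so Lemma \ref{total} applies), and via $k-1<n$ it forces $d_i\ge 2$ rather than merely $d_i\ge 1$.
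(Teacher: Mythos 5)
Your proposal is correct and follows essentially the same route as the paper's proof: minimality of $K_{k,n}$ plus Lemma \ref{total} makes every color class a connected spanning subgraph, and the degree-$k$ side forces every color class to have a uniform degree $d_i$ at those vertices with $\sum_i d_i=k$ (the paper's version of your squeeze argument, stated more tersely). The only difference is the finish: the paper argues by contradiction, using pigeonhole to find a color with $d_i=1$ and observing that such a class is a disconnected union of stars (or a perfect matching), whereas you derive $d_i\geq 2$ directly from the edge count $n\,d_i=e(G_i)\geq n+k-1$ --- the same fact in contrapositive, quantitative form (and, incidentally, this step needs only $k\geq 2$, not $k-1<n$; the hypothesis $n\geq k$ is used solely to ensure $K_{k,n}$ is minimally $k$-edge-connected).
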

\begin{proof}
Let $\Gamma$ be an extremal $MC_k$-coloring with $t$ colors and let $G_i$ be the subgraph of $G$ induced by the edges with color $i$.
Because $K_{k,n}$ is a minimal $k$-edge-connected graph, by Lemma \ref{total} each $G_i$ is a spanning subgraph of $G$.
Let $A,B$ be the bipartition (independent sets) of $G$ with $|A|=n$ and $|B|=k$. Then each vertex in $A$ has degree $k$.

We prove that $mc_k(K_{k,n})\leq\lfloor\frac{k}{2}\rfloor$ by contradiction.
Suppose $mc_k(K_{k,n})=t\geq\lfloor\frac{k}{2}\rfloor+1$. For a vertex $u$ of $A$, let $d_{G_i}(u)=r_i$.
Then $\sum_{i\in [t]}r_i=k$ and each $r_i\geq1$. Because every two vertices of $A$ are connected by $k$ edge-disjoint monochromatic paths,
and the degree of every vertex in $A$ is $k$, we have that for each $u\in A$, $d_{G_i}(u)=r_i$. Because $t\geq\lfloor\frac{k}{2}\rfloor+1$,
there is a color $i$ such that $d_{G_i}(u)=1$, i.e., all vertices of $A$ are leaves of $G_i$. Because $K_{k,n}$ is a bipartite graph with
bipartition $A$ and $B$, $G_i$ is a perfect matching if $n=k$, and $G_i$ is the union of $k$ stars if $n>k$, both of which contradict
that $G_i$ is a connected spanning subgraph of $G$. Therefore, $mc_k(K_{k,n})\leq\lfloor\frac{k}{2}\rfloor$.
\end{proof}

\begin{corollary}
Conjecture \ref{coj} is true for $G=K_{k,n}$, where $k$ is even and $ n\geq k\geq 4$; it is also true for $G=K_{3,n}$, where $k=3\leq n$.
\end{corollary}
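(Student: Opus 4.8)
The plan is to reduce the corollary to the single assertion $mc_k(K_{k,n})=\lfloor k/2\rfloor$ and then supply the one thing the upper bound of Theorem~\ref{kkn} is missing, a matching construction. First I would observe that $K_{k,n}$ with $n\ge k$ is \emph{minimally} $k$-edge-connected: each of the $n$ vertices in the large part has degree exactly $k$, so deleting any edge drops such a vertex below degree $k$ and destroys $k$-edge-connectivity. Hence $K_{k,n}$ is its own minimum $k$-edge-connected spanning subgraph $H$, so $e(G)-e(H)=0$ and the value predicted by Conjecture~\ref{coj} is exactly $\lfloor k/2\rfloor$. It therefore suffices to show $mc_k(K_{k,n})=\lfloor k/2\rfloor$, and since Theorem~\ref{kkn} already gives $mc_k(K_{k,n})\le\lfloor k/2\rfloor$, only a lower bound (an explicit coloring) remains.

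For $k=3$ the target is $\lfloor k/2\rfloor=1$: coloring all edges with a single color works, because $K_{3,n}$ is $3$-edge-connected and hence any two vertices are joined by $3$ edge-disjoint paths, all trivially monochromatic. For even $k=2m$ the plan is to produce an $MC_k$-coloring with $m=k/2$ colors in which \emph{every color class is a spanning $2$-edge-connected subgraph}. Granting this, for any two vertices each color class contributes $2$ edge-disjoint monochromatic paths, and paths from different classes are edge-disjoint since the classes partition the edges; this yields $2m=k$ edge-disjoint monochromatic paths, so the coloring is an $MC_k$-coloring using $m$ colors and $mc_k(K_{k,n})\ge\lfloor k/2\rfloor$.

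To build such a decomposition I would use Fact~\ref{bip} on a core copy of $K_{k,k}$. Write $B$ for the side of size $k=2m$ and take the first $k$ vertices of the large side together with all of $B$; this spans a $K_{2m,2m}$, which by Fact~\ref{bip} decomposes into $m$ Hamilton cycles, and I would make color $i$ the $i$-th cycle. Each Hamilton cycle is $2$-edge-connected and spans these $2k$ vertices. Then I would attach each remaining vertex $a$ of the large side by picking an arbitrary perfect matching of $B$ (which exists as $|B|=k$ is even) and assigning its $m$ edges bijectively to the $m$ colors; this makes $a$ adjacent, in each color class, to two distinct vertices of $B$, i.e. of color-degree exactly $2$. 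Adding a degree-$2$ vertex whose two neighbors already lie in a $2$-edge-connected graph keeps the graph $2$-edge-connected, so after all attachments each color class is still $2$-edge-connected and now spans $V(K_{k,n})$.

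The substantive ingredient is the Hamilton-cycle decomposition of $K_{2m,2m}$ (Fact~\ref{bip}); the rest is the attachment step plus the bookkeeping that each vertex of the large side has color-degree exactly $2$ in every class. The point to check carefully---what I expect to be the only real obstacle---is that the attachment preserves $2$-edge-connectivity of all $m$ classes simultaneously while keeping the color-degrees exactly $2$, so that the count of precisely two monochromatic paths per color (hence $k$ in total) is maintained for all $n\ge k$ uniformly. Combining this lower bound with Theorem~\ref{kkn} gives $mc_k(K_{k,n})=\lfloor k/2\rfloor$, which by the reduction is exactly Conjecture~\ref{coj} for these graphs.
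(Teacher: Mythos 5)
Your proposal is correct and follows essentially the same route as the paper: both reduce the statement to $mc_k(K_{k,n})=\lfloor k/2\rfloor$ via minimality of $K_{k,n}$, use Theorem~\ref{kkn} for the upper bound, handle $k=3$ by the all-one-color coloring, and for even $k$ decompose a core $K_{k,k}$ into $k/2$ Hamiltonian cycles (Fact~\ref{bip}) and attach each remaining vertex of the large side with two edges per color, so that every color class is a spanning $2$-edge-connected subgraph. The only quibble is the phrase ``perfect matching of $B$'' (the set $B$ is independent in $K_{k,n}$); what you mean, and what works, is a pairing of the vertices of $B$, i.e.\ a partition of the $k$ edges at each attached vertex into $k/2$ pairs, exactly as in the paper.
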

\begin{proof}
If $k=2l$ is even, then we prove that $mc_k(K_{k,n})=\lfloor\frac{k}{2}\rfloor=l$. Actually, we only need to construct an $MC_k$-coloring of
$K_{k,n}$ with $l$ colors. Let $A_1$ be a subset of $A$ with $k$ vertices and $A_2=A-A_1$, and let $H$ be the subgraph of $K_{k,n}$ whose vertex set
is $A_1\cup B$. Then $H=K_{k,k}$, and by Fact \ref{bip} $H$ can be decomposed into $l$ Hamiltonian cycles $\{C_1,\cdots,C_l\}$.
Because the degree of each vertex in $A_2$ is $k=2l$, we mark each two edges incident with $v\in A_2$ with $i$, $1\leq i\leq l$.
Let $E_i$ be the edge set with mark $i$, and let $G_i=C_i\cup E_i$. It is obvious that $G_i$ is a $2$-edge-connected spanning graph of $K_{k,n}$.
We color every edge of $G_i$ by $i$, and then we find an $MC_k$-coloring of $K_{k,n}$ with $l$ colors.

Because $K_{3,n}$ is a minimal $3$-edge-connected graph for $n\geq 3$, and an $MC_3$-coloring of $K_{3,n}$ assigns color $1$ to all its edges,
we have $mc_3(K_{3,n})\geq1$. By Theorem \ref{kkn}, $mc_3(K_{3,n})\leq1$, and thus $mc_3(K_{3,n})=1$.
\end{proof}

If $k\leq \Psi(G)$, then $G$ is $k$-edge-connected. By Theorem \ref{N-T}, there are $k$ edge-disjoint spanning trees $T_1,\cdots,T_k$ of $G$
and we color $E(G)$ such that each $T_i$ is colored by $i$. Then any two vertices $u,v$ are connected by at least $k$ monochromatic $uv$-paths with different colors.
So, we have the following result.
\begin{corollary} \label{3-5}
For a graph $G$ with $\Psi(G)\geq k\geq2$, $mc_k(G)\geq e(G)-k(n-2)$.
\end{corollary}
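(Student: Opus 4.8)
The plan is to exhibit an explicit $MC_k$-coloring of $G$ that uses exactly $e(G)-k(n-2)$ colors; since $mc_k(G)$ is by definition the \emph{maximum} number of colors in such a coloring, this immediately yields the claimed lower bound. The engine driving the construction is the Nash-Williams--Tutte theorem in the form of Theorem \ref{N-T}: the hypothesis $\Psi(G)\geq k$ guarantees that $G$ contains at least $k$ pairwise edge-disjoint spanning trees $T_1,\dots,T_k$. In particular these trees already witness that $G$ is $k$-edge-connected, so $mc_k(G)$ is well-defined.

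First I would fix such a family $T_1,\dots,T_k$ and define a coloring $\Gamma$ as follows: assign color $i$ to every edge of $T_i$ for $1\le i\le k$, and assign a distinct fresh color to each of the remaining $e(G)-k(n-1)$ edges lying outside $\bigcup_{i\in[k]}T_i$. The verification that $\Gamma$ is an $MC_k$-coloring is the conceptual heart of the argument, though it is short: for any two distinct vertices $u,v$, each tree $T_i$ contains a unique $uv$-path, and this path is monochromatic because all its edges carry color $i$; since the trees are pairwise edge-disjoint, these $k$ paths are edge-disjoint. Hence $u$ and $v$ are joined by at least $k$ edge-disjoint monochromatic paths, exactly as required.

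It then remains to count colors. The $k$ spanning trees contribute $k$ colors, while the leftover edges contribute $e(G)-k(n-1)$ further colors, each used exactly once; since every $T_i$ has precisely $n-1$ edges and the trees are edge-disjoint, these counts are correct. Therefore $\Gamma$ uses $k+\bigl(e(G)-k(n-1)\bigr)=e(G)-k(n-2)$ colors, and consequently $mc_k(G)\geq e(G)-k(n-2)$.

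I do not anticipate a genuine obstacle: once Theorem \ref{N-T} supplies the edge-disjoint spanning trees, the remainder is a direct construction together with an arithmetic count. The only point requiring a moment of care is confirming that the hypothesis $\Psi(G)\ge k$, rather than an equality, still furnishes at least $k$ edge-disjoint spanning trees; writing $\Psi(G)=m\ge k$, Theorem \ref{N-T} gives exactly $m$ edge-disjoint spanning trees of $G$, of which any $k$ suffice for the construction.
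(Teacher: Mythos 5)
Your proof is correct and takes essentially the same approach as the paper: both use Theorem \ref{N-T} to extract $k$ edge-disjoint spanning trees, color each tree monochromatically with its own color so that the unique tree-paths give $k$ edge-disjoint monochromatic $uv$-paths, and assign fresh colors to the leftover edges to reach the count $k+\bigl(e(G)-k(n-1)\bigr)=e(G)-k(n-2)$. The only difference is that you make explicit the fresh coloring of the remaining edges, the arithmetic, and the point that $\Psi(G)\geq k$ still yields at least $k$ such trees, all of which the paper leaves implicit.
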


\section{Results for uniformly monochromatic
\newline $k$-edge-connection number}

The monochromatic $k$-edge-connected graph allows $k$ edge-disjoint monochromatic paths between any two vertices of the graph.
In this section, we generalize the concept of monochromatic $k$-edge-connection to uniformly monochromatic  $k$-edge-connection, and get some results.

An edge-colored $k$-edge-connected graph $G$ is
{\em uniformly monochromatic $k$-edge-connec\\ted} if every two distinct vertices are connected by
at least $k$ edge-disjoint monochromatic paths of $G$ such that all these $k$ paths have the same color. Note that for different pairs of vertices
the paths may have different colors. An edge-coloring $\Gamma$ of $G$ is a {\em uniformly monochromatic $k$-edge-connection coloring ($UMC_k$-coloring) } if it
makes $G$ uniformly monochromatically $k$-edge-connected. The {\em uniformly monochromatic $k$-edge-connection number}, denoted by $umc_k(G)$, of a $k$-edge-connected graph
$G$ is the maximum number of colors that are needed in order to make $G$ uniformly monochromatic $k$-edge-connected. An {\em extremal $UMC_k$-coloring}
of $G$ is an $UMC_k$-coloring that uses $umc_k(G)$ colors.
 We call an extremal $UMC_k$-coloring  a {\em good $UMC_k$-coloring} of $G$ if the coloring has the maximum number of trivial edges.
 A uniformly monochromatic $k$-edge-connected graph is also a monochromatic connected graph when $k=1$.

\begin{theorem} \label{umc1}
Let $G$ be a $k$-edge-connected graph with $k\geq 2$. Then $umc_k(G)=e(G)-e(H)+1$, where $H$ is a minimum $k$-edge-connected spanning subgraph of $G$.
\end{theorem}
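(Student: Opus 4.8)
The plan is to prove the two inequalities separately, and the lower bound is the easy one. Taking a minimum $k$-edge-connected spanning subgraph $H$, I would color all of its edges with a single color and give each of the remaining $e(G)-e(H)$ edges its own fresh (trivial) color. Since $H$ is $k$-edge-connected and spanning, any two vertices are joined by $k$ edge-disjoint paths lying entirely in $H$, hence all of one color, so this is a $UMC_k$-coloring with $e(G)-e(H)+1$ colors and $umc_k(G)\ge e(G)-e(H)+1$.

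For the upper bound I would fix a \emph{good} $UMC_k$-coloring $\Gamma$, let $c_1,\dots,c_m$ be its non-trivial colors and $G_1,\dots,G_m$ the induced subgraphs, with $e_i=e(G_i)$. Counting colors by type gives the identity $umc_k(G)=m+\bigl(e(G)-\sum_i e_i\bigr)=e(G)-\sum_i(e_i-1)$, and the lower bound just proved already forces $\sum_i(e_i-1)\le e(H)-1$; the entire remaining task is the reverse inequality $\sum_i(e_i-1)\ge e(H)-1$. Note first that a trivial (single-edge) color serves no pair when $k\ge2$, so the whole uniform connectivity requirement is carried by the $G_i$: for every pair $u,v$ some single $G_i$ contains $k$ edge-disjoint $uv$-paths.

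The structural core is a sequence of goodness exchanges, in the spirit of Claims~\ref{3-1}--\ref{3-4} and Lemma~\ref{c2}. I would show that each $G_i$ is $k$-edge-connected (recoloring a component, or an edge joining two $k$-edge-connected pieces, with a fresh color), and is moreover \emph{minimally} $k$-edge-connected (any deletable edge can be recolored trivially, increasing the number of trivial edges and contradicting goodness). The key exchange is that \emph{no two $G_i,G_j$ share $\ge 2$ vertices}: if $|V(G_i)\cap V(G_j)|\ge 2$, then $G_i\cup G_j$ is $k$-edge-connected (concatenating the $k$ edge-disjoint paths through a common vertex, as in Lemma~\ref{c2}) but not minimal, so one recolors $c_j$ into $c_i$ and then peels off a deletable edge as a trivial color, leaving the number of colors unchanged while creating a new trivial edge --- again contradicting goodness. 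Consequently the sets $V(G_1),\dots,V(G_m)$ are $k$-edge-connected, pairwise meet in at most one vertex, and (by the coverage remark) contain every pair of vertices; thus $\{V(G_i)\}$ is a clique cover of $K_n$ realizing a linear space, and each block has $|V(G_i)|\ge k+1$ with $e_i\ge\lceil k|V(G_i)|/2\rceil$.

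The main obstacle is the case $m\ge 2$, i.e.\ defeating this linear-space (design-like) configuration, since local merging no longer frees edges when all blocks meet pairwise in a single vertex. Here I would invoke the de Bruijn--Erd\H{o}s theorem: a nontrivial linear space on $n$ points has at least $n$ blocks, so $m\ge n$. On the other hand, combining $\sum_i(e_i-1)\ge \tfrac{k}{2}\sum_i|V(G_i)|-m\ge \bigl(\tfrac{k(k+1)}{2}-1\bigr)m$ with the extremality bound $\sum_i(e_i-1)\le e(H)-1\le k(n-1)-1$ (Mader, Theorem~\ref{Mader}) forces $m<\tfrac{2kn}{(k+2)(k-1)}\le n$ for every $k\ge2$, contradicting $m\ge n$. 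Hence $m\le 1$, and coverage gives $m=1$ with $G_1$ a $k$-edge-connected spanning subgraph; the two opposite bounds on $\sum_i(e_i-1)$ then yield $e_1=e(H)$ and $umc_k(G)=e(G)-e(H)+1$. I expect the delicate points to be the merge-and-delete lemma (guaranteeing a deletable edge once two blocks overlap in two vertices) and the clean invocation of de Bruijn--Erd\H{o}s to kill the linear-space case.
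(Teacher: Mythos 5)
Your lower bound, the counting identity $umc_k(G)=e(G)-\sum_i(e_i-1)$, the extremality bound $\sum_i(e_i-1)\le e(H)-1$, and the de Bruijn--Erd\H{o}s endgame are all sound, but the structural pillar everything rests on --- the ``key exchange'' asserting that no two non-trivial color classes share two or more vertices --- has a genuine gap. Your argument there is: if $|V(G_i)\cap V(G_j)|\ge 2$, then $G_i\cup G_j$ is $k$-edge-connected \emph{but not minimally so}, hence after merging $c_j$ into $c_i$ you can peel off a deletable edge as a fresh trivial color. The non-minimality claim is false. Take $k=2$ and let $G_i$ and $G_j$ be the edge-disjoint $4$-cycles $uavbu$ and $ucvdu$, sharing exactly the two vertices $u,v$: their union is $2$-edge-connected, yet every edge is incident with one of the degree-$2$ vertices $a,b,c,d$, so deleting any edge destroys $2$-edge-connectivity; the union is \emph{minimally} $2$-edge-connected and has no deletable edge. (High local edge-connectivity between $u$ and $v$ does not force non-minimality: $K_{3,n}$ is minimally $3$-edge-connected although pairs of vertices on its small side have local edge-connectivity $n$.) In this situation your merge loses a color with nothing to compensate, so neither extremality nor goodness is contradicted, the pairwise-intersection bound is not established, and the linear-space/de Bruijn--Erd\H{o}s step never gets off the ground.

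The paper's proof avoids this issue entirely: it never controls pairwise intersections of color classes. After showing each non-trivial class is $k$-edge-connected (Claim \ref{2-1}, which needs the careful ``recolor $G_i-e$ by the color serving $(u,v)$'' argument rather than your parenthetical sketch), it fixes a vertex $x$ and supposes some class $G_j$ avoids $x$. Since every pair $(x,y)$ with $y\in V(G_j)$ must be served by a single class containing both, the classes through $x$ cover $V(G_j)$, and one can choose $s\le |V(G_j)|\le e(G_j)$ of them to do so. Merging those $s$ classes into one color (their union is $k$-edge-connected because they all share the single vertex $x$) and dissolving $G_j$ into $e(G_j)$ trivial colors does not decrease the number of colors but increases the number of trivial edges, contradicting goodness (Claim \ref{2-2}). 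Hence every non-trivial class contains every vertex, which forces $t=1$ (Claim \ref{3-6}) without any intersection analysis. To rescue your route you would need a global exchange of this kind in place of the local two-class merge; as written, that step fails.
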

We prove the theorem below. For convenience, we abbreviate ``monochromatic $uv$-path" as ``$uv$-path".
Let $\Gamma$ be a good $UMC_k$-coloring of $G$. Then, suppose that
the number of non-trivial colors of $\Gamma$ is $t$ and denote the set of them by $[t]$.
Let $G_i$ be the subgraph of $G$ induced by the edges with a non-trivial color $i$, $1\leq i\leq t$. Let $G'=\bigcup_{i\in [t]}G_i$.

\begin{claim} \label{2-1}
Each $G_i$ is $k$-edge-connected.
\end{claim}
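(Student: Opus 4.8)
The plan is to mirror the structure of the extremal-coloring arguments used earlier for $MC_k$: first reduce each color class to the connected case, then forbid small edge cuts inside it, and finally convert the existence of a small cut into a recoloring that either adds a color (contradicting extremality) or trivializes an edge (contradicting goodness).

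First I would establish connectivity of each $G_i$. Exactly as in the remark preceding Conjecture~\ref{coj}, if some $G_i$ were disconnected I could recolor one of its components with a brand-new color: any served pair keeps its $k$ edge-disjoint monochromatic paths, since all $k$ paths assigned to a pair lie in a single component of their common color, so the pair is either untouched or now served inside the recolored component. This produces an $UMC_k$-coloring with one more color, contradicting extremality. Hence each $G_i$ is connected. Next, suppose for contradiction that some connected $G_i$ is not $k$-edge-connected, and choose a bond $C$ of $G_i$ with $|C|=s\le k-1$, giving $V(G_i)=X\cup\bar X$ with $G_i[X]$ and $G_i[\bar X]$ both connected. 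Since $C$ is an edge cut of $G_i$ of size less than $k$ separating $X$ from $\bar X$, no pair with one endpoint in $X$ and one in $\bar X$ can be joined by $k$ edge-disjoint monochromatic paths of color $i$; therefore \emph{every pair that color $i$ serves lies entirely in $X$ or entirely in $\bar X$}. This ``no-straddle'' observation is the structural key.

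The intended contradiction is to split color $i$ into two colors serving the two sides separately: recolor $E(G_i[\bar X])$ (and redistribute the at most $k-1$ edges of $C$) with a fresh color, so that the total number of colors strictly increases. To verify this is still an $UMC_k$-coloring I would check each served pair: pairs served by colors other than $i$ are untouched, and a pair inside $\bar X$ (resp. inside $X$) should retain $k$ edge-disjoint monochromatic paths within the side that now carries its color. The main obstacle is precisely here: a served pair $u,v$ inside $X$ need not have all $k$ of its color-$i$ paths contained in $G_i[X]$, because up to $\lfloor s/2\rfloor$ of them may dip into $\bar X$ and back across $C$. So the naive side-split can destroy the $k$ monochromatic paths of such a pair, and this is the step that requires real work rather than routine checking.

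To overcome this I would exploit two consequences of optimality. First, extremality forces every non-trivial edge to be \emph{critical}: if deleting an edge $e$ from its color class still left every pair that class serves $k$-edge-connected, then recoloring $e$ with a fresh color would keep the coloring $UMC_k$ while adding a color, a contradiction; hence each edge of $C$ lies in a tight $k$-edge cut witnessing the $k$-connectivity of some same-side served pair. Second, I would uncross the bond $C$ with these tight $k$-cuts using submodularity of the cut function of $G_i$, in order to show that a genuinely ``cross-dependent'' $G_i$ — one in which the pairs on each side truly must route through the other side across $C$ — cannot survive in an extremal coloring, since the cross edges could then be reorganized to free an additional color. The uncrossing bookkeeping, together with a case check that every served pair retains its $k$ same-color paths after the reorganization, is the technical heart of the argument; by contrast the connectivity reduction and the no-straddle observation are straightforward.
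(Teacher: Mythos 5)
You correctly isolate the crux: when a bond $C$ of $G_i$ with $|C|\le k-1$ splits $V(G_i)$ into $X$ and $\bar X$, a pair $u,v\in X$ that color $i$ serves may have some of its $k$ color-$i$ paths dip across $C$ into $\bar X$ and back, so the naive ``split $G_i$ into two color classes'' move can destroy those paths. The problem is that your proposal never actually resolves this obstacle. The plan to combine edge-criticality with ``uncrossing the bond against tight $k$-cuts via submodularity'' is left entirely as a sketch, and it is far from clear it can be carried out: nothing in it explains how the redistributed cross edges of $C$ would restore $k$ edge-disjoint \emph{same-colored} paths for a pair whose routes genuinely alternate across the bond. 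Since you yourself label this unperformed bookkeeping ``the technical heart,'' the proof is incomplete exactly where the difficulty lies. (Your two preliminary steps --- connectivity of each color class, and the no-straddle observation --- are correct, and the no-straddle fact does appear in the paper, but only for the two endpoints of a single bond edge.)

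The paper avoids the obstacle by \emph{merging} rather than splitting, and this is the idea your attempt is missing. Take a single edge $e=uv$ of the bond $C$. Since $G_i$ has at most $k-1$ edge-disjoint $uv$-paths and $\Gamma$ is a $UMC_k$-coloring, the pair $(u,v)$ must be served by some other color $j$, i.e.\ $G_j$ contains $k$ edge-disjoint $uv$-paths. Now recolor all of $G_i-e$ with color $j$ and keep color $i$ on $e$ alone. Pairs served by colors other than $i$ are untouched; a pair $(x,y)$ served by $i$ keeps its $k$ paths unless one of them, say $P_1$, uses $e$, in which case $(P_1\setminus e)\cup P$, where $P$ is a $uv$-path of $G_j$, is a trail of color $j$ that is edge-disjoint from $P_2,\dots,P_k$ (because $G_i$ and $G_j$ are edge-disjoint), so it contains an $xy$-path replacing $P_1$. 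Hence the new coloring is still an extremal $UMC_k$-coloring --- the number of colors is unchanged --- but $e$ has become a trivial edge, contradicting that $\Gamma$ is \emph{good}, i.e.\ chosen with the maximum number of trivial edges. Note the contradiction is against goodness, not extremality: insisting on gaining a color, as your split does, is precisely what forces you into the crossing-path difficulty, while the merge sidesteps it with a one-edge reroute.
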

\begin{proof}
Let $\pi_i$ denote the set of pairs $(u,v)$ such that there are at least $k$ edge-disjoint $uv$-paths colored by $i \in [t]$. Therefore, any vertex pair $(u,v)$ belongs to some $\pi_i$.

We first prove it by contradiction that each $G_i$ is $k$-edge-connected.

Suppose that $G_i$ is not a $k$-edge-connected graph. Then there exists a bond $C(G_i)$ with $|C(G_i)|\leq k-1$, and $G_i\backslash C(G_i)$ has two
components $M_1$ and $M_2$. Let $e=vu$ be an edge of $C(G_i)$, $u\in V(M_1)$, $v\in V(M_2)$. Then there are at most $|C(G)|\leq k-1$ edge-disjoint
paths in $G_i$ between $u,v$. Therefore there exists a $j\neq i$ of $[t]$ such that there are at least $k$ edge-disjoint $uv$-paths of $G_j$.

Recolor edges of $G_i-e$ with $j$ and keep the color of $e$, and denote the new coloring of $G$ by $\Gamma'$.

Because any non-trivial color $r\neq i$ is not changed. So, under $\Gamma'$, any pair $(x,y)\in \pi_r$ also have at least $k$ edge-disjoint $xy$-paths colored $r$.
For any pair $(x,y)=\pi_i$, if any $k$ edge-disjoint $xy$-paths (Note that $P_1,\cdots,P_k$) of $G_i$ under $\Gamma$ do not contain $e$. Then these $k$ edge-disjoint
$xy$-paths are retained. Otherwise, there is a path (Note that $P_1$) contains $e$. We choose a path $P$ of $G_j$ whose terminals are $u,v$. Then $T=(P_1\backslash e)\cup P$ is a trail between $x,y$
and $E(T)\cap \bigcup_{l\neq1}E(P_l)=\emptyset$. Let $P'$ be a $xy$-path of $T$. Then $P',P_2,\cdots,P_k$ are $k$ edge-disjoint $xy$-paths colored by $j$ (under $\Gamma'$).
Therefore, $\Gamma'$ is still an extremal $UMC_k$-coloring of $G$, but then $e$ becomes to a trivial edge, which contradicts that $\Gamma$ is good. So, each $G_i$ is $k$-edge-connected.

\end{proof}
By Claim \ref{2-1}, because $k\geq 2$, we have $e(G_i)\geq|G_i|\geq3$. Denote $G_x=\bigcup_{x\in V(G_i)}G_i$, $F_x=G'-G_x$.
\begin{claim} \label{2-2}
Each $G_x$ is a  $k$-edge-connected  spanning subgraph of $G$. Furthermore, $F_x=\emptyset$.
\end{claim}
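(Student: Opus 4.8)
The plan is to prove the three assertions in the order stated, reserving the real work for $F_x=\emptyset$. Throughout, write $I_x=\{i\in[t]:x\in V(G_i)\}$, so that $G_x=\bigcup_{i\in I_x}G_i$. First I would dispose of the \emph{spanning} claim using only the validity of $\Gamma$: for an arbitrary vertex $y$, the pair $(x,y)$ belongs to some $\pi_i$, so there are $k$ edge-disjoint monochromatic $xy$-paths all of color $i$. Hence $x,y\in V(G_i)$, that is $i\in I_x$ and $y\in V(G_i)\subseteq V(G_x)$. Since $y$ was arbitrary, $V(G_x)=V(G)$.

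Next, for \emph{$k$-edge-connectivity}, I would isolate the elementary fact that a union of $k$-edge-connected subgraphs all containing a common vertex $x$ is again $k$-edge-connected, and apply it to $G_x=\bigcup_{i\in I_x}G_i$, each $G_i$ being $k$-edge-connected by Claim \ref{2-1}. To prove the fact, take any partition $V(G_x)=X\sqcup Y$ with $x\in X$ and pick $w\in Y$; then $w\in V(G_i)$ for some $i\in I_x$, and the $k$ edge-disjoint $xw$-paths inside the $k$-edge-connected $G_i\subseteq G_x$ each cross from $X$ to $Y$, using $k$ distinct edges of the cut. As the cut was arbitrary, $G_x$ is $k$-edge-connected.

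The heart of the matter is $F_x=\emptyset$, which I would attack by contradiction with the extremality/goodness of $\Gamma$. A preliminary observation, proved by the same recoloring trick as in Claim \ref{2-1}, is that each $G_i$ is \emph{minimally} $k$-edge-connected: a deletable edge could be recolored with a fresh color without harming any pair of $\pi_i$ (the residual $G_i$ is still $k$-edge-connected), which would increase the number of colors. Now suppose some non-trivial color $j$ has $x\notin V(G_j)$. Every vertex $p\in V(G_j)$ lies in some $G_i$ with $i\in I_x$, because the pair $(x,p)$ is served; thus $G_j$ meets the color classes of $G_x$. Choosing an $i\in I_x$ with $V(G_i)\cap V(G_j)\neq\emptyset$ and recoloring all of $G_j$ with $i$ keeps $G_i\cup G_j$ $k$-edge-connected (two $k$-edge-connected graphs sharing a vertex), while every pair of $\pi_j$ retains $k$ edge-disjoint paths, now all colored $i$, inside the $k$-edge-connected $G_j$. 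This merge preserves the $UMC_k$ property. I would also handle the favorable special case directly: if some edge $pq\in E(G_j)$ has both ends in a common class $G_i$ with $i\in I_x$, then, exactly as in Claim \ref{2-1}, recoloring $G_j\setminus pq$ with $i$ and leaving $pq$ trivial reroutes every path through $pq$ via a $pq$-path of the $k$-edge-connected $G_i$, giving the same number of colors but one more trivial edge, contradicting that $\Gamma$ is good.

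The main obstacle is the remaining case, together with the bookkeeping that turns the merge into a genuine contradiction, since a bare merge loses one color and is therefore not immediately forbidden. The key leverage I would push is that validity of $\Gamma$ forces the intersections $V(G_i)\cap V(G_j)$ to be substantial: if an $I_x$-class $G_i$ met $V(G_j)$ in a single vertex $p$, then taking $y\in V(G_i)\setminus V(G_j)$ (for instance $y=x$) one finds that a suitable cross-pair $(q,y)$ with $q$ a $G_j$-neighbor of $p$ has no class containing both endpoints, so it could not be served — contradicting validity. Large overlap means the fattened class $G_i\cup G_j$ is far from minimal, so re-minimizing it (recolor a minimum $k$-edge-connected spanning subgraph of $G_i\cup G_j$ with color $i$ and make the surplus edges trivial) recovers colors. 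Playing Mader's bound $e\le k(n-1)$ for minimal $k$-edge-connected graphs (Theorem \ref{Mader}) on the merged pieces against the edge total of the fattened class, I would show the net operation strictly increases either the number of colors, or, at equal color count, the number of trivial edges — contradicting extremality (goodness) of $\Gamma$ and forcing $F_x=\emptyset$. I expect this final counting, establishing that the recovered trivial edges always outnumber the colors spent in merging, to be the technical crux.
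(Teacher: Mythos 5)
Your first two parts are fine: the spanning argument (every pair $(x,y)$ lies in some $\pi_i$, forcing $y\in V(G_x)$) and the cut-counting proof that a union of $k$-edge-connected subgraphs through a common vertex is again $k$-edge-connected are exactly what the paper uses (the paper merely asserts the latter). The gap is in the proof that $F_x=\emptyset$, and it is twofold. First, your claim that an $I_x$-class $G_i$ cannot meet $V(G_j)$ in a single vertex $p$ is unjustified: the cross-pair $(q,y)$ only needs to be served by \emph{some} class $G_m$ with $q,y\in V(G_m)$, and nothing forces $G_m\in\{G_i,G_j\}$; a third class can serve it, so no contradiction with validity arises. Second, that single-vertex-overlap case is precisely where your merge-and-reminimize plan collapses: if $G_i$ and $G_j$ (each minimally $k$-edge-connected, as you correctly observed) share exactly one vertex $p$, then $G_i\cup G_j$ is itself minimally $k$-edge-connected --- for any $e\in E(G_i)$, a cut of size at most $k-1$ in $G_i\setminus e$ extends to a cut of the same size in $(G_i\cup G_j)\setminus e$ by placing all of $G_j$ on the side containing $p$. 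Hence the minimum $k$-edge-connected spanning subgraph of $G_i\cup G_j$ is $G_i\cup G_j$ itself, re-minimizing recovers no trivial edges, and your operation strictly loses one color: no contradiction is obtained. The ``final counting'' you defer is therefore not a technicality; in this configuration it provably fails.

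The idea you are missing (the paper's actual argument) is to dissolve $G_j$ entirely rather than merge it into one class. Since every vertex $p\in V(G_j)$ lies in some class of $G_x$ (the pair $(x,p)$ must be served --- you noted this yourself), choose a minimum number $s$ of classes $G_{i_1},\dots,G_{i_s}$ contained in $G_x$ with $V(G_j)\subseteq\bigcup_{r\in[s]}V(G_{i_r})$; then $s\le |G_j|$. Recolor all of $G_{i_1},\dots,G_{i_s}$ with the single color $i_1$ (their union $G^*$ is $k$-edge-connected since all contain $x$), and give every edge of $G_j$ its own fresh color. Pairs in $\pi_{i_1},\dots,\pi_{i_s}$ or in $\pi_j$ have both ends in $V(G^*)$ and are served monochromatically inside $G^*$; all other pairs are untouched, so the new coloring is still a $UMC_k$-coloring. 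The number of colors changes by $(e(G_j)-1)-(s-1)=e(G_j)-s\ge |G_j|-s\ge 0$, because $G_j$ is $k$-edge-connected with $k\ge2$ and hence $e(G_j)\ge |G_j|$; meanwhile the number of trivial edges increases by $e(G_j)\ge 3$. This contradicts extremality or goodness of $\Gamma$, and it is exactly the counting that a single-class merge cannot replicate.
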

\begin{proof}
If there is an $x\in V(G)$ such that $G_x$ is not a spanning subgraph of $G$, then there is a vertex $y\in V(G)\backslash V(G_x)$.
Because $G$ is a simple graph and $k\geq 2$, any two vertices are connected by at least one non-trivial path. It is obvious that
there are no non-trivial $xy$-path, a contradiction. Therefore, $G_x$ is a spanning subgraph of $G$.

Because each $G_i$ is $k$-edge-connected, $G_x$ is also $k$-edge-connected. Therefore, each $G_x$ is a  $k$-edge-connected spanning subgraph of $G$.

Now we prove that $F_x=\emptyset$. Otherwise, if $F_x\neq\emptyset$, then there is a $G_j\subseteq F_x$ and $|G_j|\geq 3$. Suppose that
$s$ is the minimum number such that $V(G_j)\subseteq\bigcup_{r\in [s]}G_{i_r}$, where $G_{i_1},\cdots,G_{i_s}$ are contained in $G_x$. Then, $s\leq |G_j|$. Because $k\geq2$, we have $e(G_j)\geq |G_j|\geq s$.
We have obtained a new coloring $\Gamma'$ from $\Gamma$ by recoloring each $G_{i_1},\cdots,G_{i_s}$ by $i_1$ and recoloring each edge of $G_j$ by different new colors.
Because $G^*=\bigcup_{r\in [s]}G_{i_r}$ is $k$-edge-connected graph, each pair $(a,b)$ with $(a,b)\in\{\pi_{i_1},\cdots,\pi_{i_s},\pi_j\}$ has $k$-edge-disjoint $ab$-paths colored $i_1$ under $\Gamma'$.
It is easy to check that $\Gamma'$ is a $UMC_k$-coloring. Then, the number of colors is not decreased, but the number of trivial colors is increased by at least $e(G_j)\geq3$, which contradicts
that $\Gamma$ is good. So, $F_x=\emptyset$.
\end{proof}

\begin{claim} \label{3-6}
$t=1$ and $G_1$ is a minimum $k$-edge-connected spanning subgraph of $G$.
\end{claim}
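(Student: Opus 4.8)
The goal is to show that a good $UMC_k$-coloring has exactly one non-trivial color, whose induced subgraph is a minimum $k$-edge-connected spanning subgraph. By Claim \ref{2-2} we already know that for every vertex $x$ we have $F_x=\emptyset$, i.e.\ $G_x=G'=\bigcup_{i\in[t]}G_i$. The first thing I would extract from this is that every $G_i$ must pass through every vertex: since $G_x$ is a spanning $k$-edge-connected subgraph for each $x$ and $F_x=\emptyset$, no $G_j$ can be disjoint from the union of the components containing a fixed vertex, which forces all the $G_i$ to share vertices and ultimately to be spanning. The main work is to rule out $t\geq 2$.

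**Ruling out $t\geq 2$.**
I would argue by contradiction exactly as in the proof of Claim \ref{3-4}. Suppose $t\geq 2$, so there are two distinct non-trivial $k$-edge-connected subgraphs $G_1$ and $G_2$. First I would observe that neither vertex set can contain the other: if, say, $V(G_1)\subseteq V(G_2)$, then recoloring all edges of $G_1$ with fresh distinct colors yields a $UMC_k$-coloring with strictly more colors (every pair still has its $k$ monochromatic paths inside $G_2$), contradicting extremality. Hence $V(G_1)\setminus V(G_2)\neq\emptyset$ and $V(G_2)\setminus V(G_1)\neq\emptyset$. Then, picking $a\in V(G_1)$, I would let $G_a=\bigcup_{i:\,a\in V(G_i)}G_i$ and choose the \emph{minimum} number of indices $i_1,\dots,i_s$ among these so that $V(G_2)\subseteq V(\bigcup_{r\in[s]}G_{i_r})$, using Claim \ref{2-2} (which guarantees $V(G_2)$ is covered by the pieces through $a$) to see this is well-defined, with $s\leq|G_2|$. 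Recoloring $G_{i_1},\dots,G_{i_s}$ all with color $i_1$ and recoloring the edges of $G_2$ with distinct new colors gives a new coloring $\Gamma'$. Since $\bigcup_{r\in[s]}G_{i_r}$ is $k$-edge-connected, every relevant pair retains $k$ monochromatic paths of the single color $i_1$, so $\Gamma'$ is a $UMC_k$-coloring; and because $e(G_2)\geq|G_2|\geq s$, the number of colors does not decrease while the number of trivial edges strictly increases, contradicting that $\Gamma$ is good.

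**Identifying $G_1$.**
Once $t=1$, there is a single non-trivial color $1$. By Claim \ref{2-2} the subgraph $G_1=G_x=G'$ is a $k$-edge-connected spanning subgraph of $G$. To see it is \emph{minimum}, I would argue once more by extremality: if $G_1$ were not a minimum $k$-edge-connected spanning subgraph, choose a minimum one $H$ with $e(H)<e(G_1)$, color all edges of $H$ with one color and all remaining edges with distinct trivial colors; this is a $UMC_k$-coloring using $e(G)-e(H)+1>e(G)-e(G_1)+1$ colors, contradicting that $\Gamma$ is extremal. The only genuinely delicate point throughout is verifying that after each recoloring the resulting coloring is still a valid $UMC_k$-coloring, i.e.\ that merging several $k$-edge-connected pieces into a single color preserves $k$ edge-disjoint monochromatic paths of one color for every pair; this rests on the fact that a connected union of $k$-edge-connected graphs sharing vertices is again $k$-edge-connected, which is exactly what Claim \ref{2-2} supplies and which I would invoke rather than reprove.
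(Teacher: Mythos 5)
Your proof is correct, and its skeleton (rule out vertex-set containment by extremality, rule out $t\geq 2$, then get minimality of $G_1$ by extremality) matches the paper's; the difference is in how you handle the crucial non-containment case, where you take a genuinely longer route. Having produced $a\in V(G_1)\setminus V(G_2)$, the paper simply observes that then $G_2\subseteq F_a$, while Claim \ref{2-2} says $F_a=\emptyset$ --- an immediate contradiction, with no recoloring at all. You instead re-run the covering-and-merging argument of Claim \ref{3-4} (which is also exactly how the paper proved $F_x=\emptyset$ inside Claim \ref{2-2}), so you are re-deriving, in a special case, a fact you already quoted at the outset. Your route is valid, and it has the mild virtue of using only Claim \ref{2-1} and the spanning part of Claim \ref{2-2}, concluding against goodness rather than extremality; but it costs a paragraph where the paper needs a line. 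Two small points need tightening if you keep your version: (i) you must take $a\in V(G_1)\setminus V(G_2)$, not merely $a\in V(G_1)$; otherwise $G_2$ itself may occur among the pieces through $a$, the minimal covering family can degenerate to $\{G_2\}$ alone, and the recoloring instruction (merge the covering pieces into color $i_1$ while making every edge of $G_2$ trivial) becomes self-contradictory. (ii) The $k$-edge-connectivity of the sub-union $\bigcup_{r\in[s]}G_{i_r}$ is not literally what Claim \ref{2-2} states (that claim concerns the full union $G_x$); what you need is the elementary fact that edge-disjoint $k$-edge-connected graphs sharing a common vertex have $k$-edge-connected union, which the paper uses silently in the proof of Claim \ref{2-2}, and you should invoke that fact rather than the claim's statement.
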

\begin{proof}
Suppose $t\geq2$. Then $V(G_1)\backslash V(G_2)\neq\emptyset$. Otherwise, if $V(G_1)\subseteq V(G_2)$, then $(u,v)\in \pi_2$ when $(u,v)\in \pi_1$.
We can recolor all edges of $G_1$ by fresh colors, and then the new coloring is also a $UMC_k$-coloring of $G$ but the number of colors is increased,
which contradicts that $\Gamma$ is extremal. So, $V(G_1)\backslash V(G_2)\neq\emptyset$, and there is a vertex $a\in V(G_1)\backslash V(G_2)$, i.e.,
$G_2\nsubseteq G_a$, $G_2\subseteq F_a$. By Claim \ref{2-2}, we have $F_a=\emptyset$, a contradiction. Therefore, $t=1$, and thus $G_1=G_a$ is a spanning subgraph of $G$.

In fact, $G_1$ is a minimum $k$-edge-connected spanning subgraph of $G$; otherwise, there exists a minimum $k$-edge-connected spanning subgraph $H$ of $G$
such that $e(H)<e(G_1)$. Coloring each edge of $H$ by $1$ and coloring the other edges by some different new colors. Then the coloring is a $UMC_k$-coloring of $G$
with more colors, which contradicts that $\Gamma$ is extremal.
\end{proof}

{\bf Proof of Theorem \ref{umc1}}:
We can prove Theorem \ref{umc1} directly by Claim \ref{3-6}.
$\blacksquare$

Because any $k$-edge-connected graph $G$ has the minimum degree $\delta(G)\geq k$, by Theorem \ref{Mader} we have that $\frac{1}{2}kn\leq e(H)\leq k(n-1)$,
where $H$ is a minimum $k$-edge-connected spanning subgraph of $G$.
\begin{corollary}
For a $k$-edge-connected graph $G$ with $k\geq2$, $e(G)-k(n-1)+1\leq umc_k(G)\leq e(G)-\frac{1}{2}kn+1$.
\end{corollary}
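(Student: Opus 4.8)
The plan is to derive both inequalities directly from Theorem~\ref{umc1}, which supplies the exact value $umc_k(G)=e(G)-e(H)+1$ for $H$ a minimum $k$-edge-connected spanning subgraph of $G$. Since $e(G)$ and the additive constant $+1$ are fixed, the two-sided estimate on $umc_k(G)$ is equivalent to the two-sided estimate $\frac{1}{2}kn\le e(H)\le k(n-1)$ on the edge count of $H$. So the entire task reduces to establishing these two bounds on $e(H)$ and then substituting into the formula of Theorem~\ref{umc1}.

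First I would prove the upper bound $e(H)\le k(n-1)$. The key observation is that, by the definitions given earlier, a minimum $k$-edge-connected spanning subgraph is by definition minimal, hence has no deletable edge, hence is minimally $k$-edge-connected. Therefore part~(1) of Mader's Theorem~\ref{Mader} applies to $H$ and gives $e(H)\le k(|V(H)|-1)=k(n-1)$, where $|V(H)|=n$ because $H$ is spanning. Feeding $e(H)\le k(n-1)$ into $umc_k(G)=e(G)-e(H)+1$ yields the lower bound $umc_k(G)\ge e(G)-k(n-1)+1$.

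Next I would establish the lower bound $e(H)\ge\frac{1}{2}kn$. Because $H$ is itself $k$-edge-connected, every vertex of $H$ has degree at least $k$, so $\delta(H)\ge k$; summing degrees then gives $2e(H)=\sum_{v\in V(H)}d_H(v)\ge kn$, i.e.\ $e(H)\ge\frac{1}{2}kn$. Substituting this into the formula of Theorem~\ref{umc1} produces the upper bound $umc_k(G)\le e(G)-\frac{1}{2}kn+1$, which completes the argument.

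There is essentially no hard step: the corollary is a routine consequence of the exact value in Theorem~\ref{umc1} combined with the two classical edge-count bounds. The only point deserving a moment's care is the remark that a minimum $k$-edge-connected spanning subgraph is minimally $k$-edge-connected, so that part~(1) of Mader's theorem is legitimately applicable to $H$ rather than merely to $G$; once this is noted, both bounds follow by direct substitution.
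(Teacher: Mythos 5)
Your proposal is correct and follows essentially the same route as the paper: both derive the bounds $\frac{1}{2}kn\leq e(H)\leq k(n-1)$ from the minimum-degree condition $\delta(H)\geq k$ and from Mader's Theorem~\ref{Mader} applied to the (minimally $k$-edge-connected) subgraph $H$, and then substitute into the exact formula of Theorem~\ref{umc1}. Your explicit remark that a minimum $k$-edge-connected spanning subgraph is minimally $k$-edge-connected, so that Mader's bound legitimately applies to $H$, is a point the paper leaves implicit.
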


By definition, a $k$-edge-connected graph $G$ satisfies that $umc_k(G)\leq mc_k(G)$. Therefore, $mc_k(G)\geq e(G)-e(H)+1$,
where $H$ is a  $k$-edge-connected spanning subgraph of $G$. By this theorem, we also get a result:
A graph contains a Hamiltonian cycle if and only if $umc_2(G)=e(G)-n+1$.

\end{document}